\documentclass[10pt]{amsart}
\usepackage{amssymb}
\usepackage{tikz-cd}
\usepackage{amsrefs}

\title{Abstract embeddability ranks}

\author {Florent P. Baudier}
\address{Department of Mathematics\\Texas A\&M University\\College Station\\Texas\\USA}
\email{florent@tamu.edu}
\urladdr{https://www.math.tamu.edu/~florent/}

\author {Christian Rosendal}
\address{Department of Mathematics\\University of Maryland\\4176 Campus Drive - William E. Kirwan Hall\\College Park, MD 20742-4015\\USA}
\email{rosendal@umd.edu}
\urladdr{sites.google.com/view/christian-rosendal/}

\newcommand{\norm}[1]{\lVert#1\rVert}

\newcommand{\forkindep}[1][]{\mathop{\mathop{\vcenter{\hbox{\oalign{\noalign{\kern-.3ex}
\hfil$\vert$\hfil\cr\noalign{\kern-.7ex}$\smile$\cr\noalign{\kern-.3ex}}}}}\displaylimits_{#1}}}

\newcommand{\maths}[1]{\[\begin{split}{#1}\end{split}\]}

\newcommand{\maps}[1]{\mathop{\overset{#1}\longrightarrow}}


\newcommand {\N}{\mathbb N}
\newcommand {\Z}{\mathbb Z}
\newcommand {\Q}{\mathbb Q}

\newcommand{\om}{\omega}

\newcommand{\tom} {\emptyset}

\newcommand{\saa}{\Rightarrow}
\newcommand{\equi}{\Leftrightarrow}

\newcommand {\Del}{ \; \Big| \;}
\newcommand {\del}{ \; \big| \;}

\newcommand {\go} {\mathfrak}
\newcommand {\ku} {\mathcal}

\newcommand {\e} {\exists}

\theoremstyle{plain}
\newtheorem{thm}{Theorem}[section]
\newtheorem*{theorem*}{Theorem}
\newtheorem{cor}[thm]{Corollary}
\newtheorem{lemme}[thm]{Lemma}
\newtheorem{prop} [thm] {Proposition}

\theoremstyle{definition}

\definecolor{groen}{rgb}{0,0.5,.7}
\definecolor{gul}{rgb}{0.94,0.8,0}
\definecolor{blaa}{rgb}{0.16,0,0.6}
\definecolor{roed}{rgb}{1,0,0}

\makeindex

\thanks{The authors were partially supported by the U.S. National Science Foundation under Grant Numbers DMS-2055604 (F.B.) and DMS-2204849 (C.R.).}

\begin{document}

\maketitle


\begin{abstract}
We describe several ordinal indices that are capable of detecting, according to various metric notions of faithfulness, the embeddability between pairs of Polish spaces. These embeddability ranks are of theoretical interest but seem difficult to estimate in practice. Embeddability ranks, which are easier to estimate in practice, are embeddability ranks generated by Schauder bases. These embeddability are inspired by the nonlinear indices \`a la Bourgain from \cite{BLMS_FM}. In particular, we resolve a problem \cite[Problem 3.10]{BLMS_FM} regarding the necessity of additional set-theoretic axioms regarding the main coarse universality result of \cite{BLMS_FM}.  
\end{abstract}


\section{Introduction}
In 1980, Bourgain \cite{Bourgain1980} showed that a separable Banach space that contains an isomorphic copy of every separable \emph{reflexive} space must also contain an isomorphic copy of $C[0,1]$, and thus, by Banach-Mazur embedding theorem, an isomorphic copy of every separable Banach space. This universality theorem is a far reaching improvement of an earlier result of Szlenk \cite{Szlenk1968}, which states that there is no separable reflexive Banach space that is isomorphically universal for the class of separable reflexive Banach spaces. Bourgain's argument in turn relies on a reformulation of the problem in terms of the computation of an ordinal index, making the problem amenable to descriptive set theoretic techniques. Bourgain's innovative approach was further developed in \cite{Bossard2002}, \cite{ArgyrosDodos}, and \cite{Dodos2009}, in order to show that a class of Banach spaces that is analytic, in the Effros-Borel structure of subspaces of $C[0,1]$, and contains all separable reflexive Banach spaces, must contain a universal space.
In \cite{BLMS_FM}, nonlinear versions of Bourgain's universality theorem were proposed. Thanks to Aharoni's theorem \cite{Aharoni1974}, it is known that $c_0$ contains a bi-Lipschitz, and hence a coarse and uniform, copy of every separable metric space. Depending on whether we consider bi-Lipschitz embeddings or coarse embeddings, the role of the class of reflexive spaces considered by Szlenk or Bourgain is played by a collection of metric spaces $R[\ku S_\alpha]_{c_0}$ with ${\alpha<\omega_1}$  and where $R=\Q$ for bi-Lipschitz embeddings and $R=\Z$ for coarse embeddings. Here $R[\ku S_\alpha]_{c_0}$ is the metric subspace of $c_0$ consisting of all vectors
$\sum_{i\in A}x_ie_i$
where $x_i\in R$ and $A$ belong to the $\alpha^{\rm{th}}$ Schreier family $\ku S_\alpha$.

In this note we prove the coarse universality result below. This result was originally proved in \cite{BLMS_FM} under  additional set theoretical assumptions, namely Martin's Axiom (MA) and the negation of the Continuum Hypothesis ($\neg\text{CH}$). The main point of our theorem is to show that these axioms are actually not required, thus solving Problem 3.10 from \cite{BLMS_FM}.

\begin{theorem*}\label{T:B}
If a separable metric space contains a coarse copy of $\Z[\ku S_\alpha]_{c_0}$ for every ordinal $\alpha<\omega_1$, then it contains a coarse copy of every separable metric space.
\end{theorem*}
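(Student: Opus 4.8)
The plan is to reduce the statement to a single embeddability question about $c_0$ and then to settle that question by an ordinal index into which the coarse moduli have been absorbed, so that no extra set-theoretic axioms are needed.

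\emph{Reduction to $c_0$.} By Aharoni's theorem $c_0$ contains a bi-Lipschitz, hence coarse, copy of every separable metric space, and a composition of coarse embeddings is a coarse embedding; so it suffices to prove that the hypothesized space $X$ contains a coarse copy of $c_0$. Since coarse embeddability of a coarsely dense subset passes to the ambient space, we may work with the net $N=\Z^{(\N)}$ of finitely supported integer vectors inside $c_0$, equipped with the unit vector basis $(e_i)$; each space $\Z[\ku S_\alpha]_{c_0}$ is already a countable metric object of this combinatorial type. We argue by contradiction: assume $X$ coarsely contains $\Z[\ku S_\alpha]_{c_0}$ for every $\alpha<\omega_1$ but that $N$ (equivalently $c_0$) does not coarsely embed into $X$.

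\emph{A basis-generated master tree with moduli absorbed.} Fix a countable dense set $(x_m)_{m\in\N}$ of $X$. For a countable metric space $M$ with a distinguished enumeration $(u_i)$ (for $N$ and for $\Z[\ku S_\alpha]_{c_0}$ we use the order coming from the $c_0$-basis), let $\mathbb{T}(M,X)$ be the tree whose nodes are finite sequences coding (i) a partial assignment $u_i\mapsto x_{m_i}$ on an initial segment and (ii) finitely much rational data describing candidate moduli $(\rho_1,\rho_2)$ on a bounded range, subject to the evident finite compatibility requirements (the lower and upper estimates on the finite domain, together with a condition forcing $\rho_1(t)\to\infty$). The point of this bookkeeping is that $M$ coarsely embeds into $X$ \emph{if and only if} $\mathbb{T}(M,X)$ is ill-founded: an infinite branch assembles a genuine coarse embedding, and any coarse embedding restricts to one on $(u_i)$ whose moduli can be rationally approximated along a branch. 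This is the embeddability rank generated by the basis $(e_i)$. Under our assumption $\mathbb{T}(N,X)$ is therefore well-founded, with some rank $\xi_X=o(\mathbb{T}(N,X))<\omega_1$, while each $\mathbb{T}(\Z[\ku S_\alpha]_{c_0},X)$ is ill-founded.

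\emph{Rank comparison through the Schreier hierarchy.} The heart of the proof is to show that an infinite branch of $\mathbb{T}(\Z[\ku S_\alpha]_{c_0},X)$ forces the rank of $\mathbb{T}(N,X)$ to be large: there is an increasing continuous cofinal map $\alpha\mapsto\zeta(\alpha)<\omega_1$, built from the ranks of the trees defining the families $\ku S_\alpha$, such that $\Z[\ku S_\alpha]_{c_0}$ coarsely embedding into $X$ implies $o(\mathbb{T}(N,X))\ge\zeta(\alpha)$. One proves this by transfinite induction on $\alpha$, transporting a coarse embedding of $\Z[\ku S_\beta]_{c_0}$, $\beta<\alpha$, along the canonical inclusion $\Z[\ku S_\beta]_{c_0}\hookrightarrow\Z[\ku S_\alpha]_{c_0}$ — which is coarse with a \emph{fixed} modulus, being a restriction of the identity map of $c_0$ — and reading off, from the combinatorics of $\ku S_\alpha$, an embedded well-founded subtree of $\mathbb{T}(N,X)$ of rank $\ge\zeta(\beta)$: at successor steps one concatenates such subtrees along admissible successions of blocks, and at limit $\alpha$ one uses the defining recursion $\ku S_\alpha=\{E:\ E\in\ku S_{\alpha_{\min E}}\}$ for a chosen $\alpha_n\nearrow\alpha$. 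Granting this, applying it to all $\alpha<\omega_1$ yields $\xi_X=o(\mathbb{T}(N,X))\ge\sup_{\alpha<\omega_1}\zeta(\alpha)=\omega_1$, contradicting $\xi_X<\omega_1$. Hence $\mathbb{T}(N,X)$ is ill-founded, $c_0$ coarsely embeds into $X$, and by the first step so does every separable metric space.

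\emph{Main obstacle.} Everything comes down to the rank comparison, and within it to the limit stage: one must show that one and the same coarse embedding of $\Z[\ku S_\alpha]_{c_0}$ simultaneously witnesses the rank contributions of all the $\ku S_{\alpha_n}$-blocks, with no modulus depending on $n$ or $\alpha$ — which is precisely why the data $(\rho_1,\rho_2)$ is carried \emph{inside} the tree rather than quantified outside it. This device is what removes the appeal to $\mathrm{MA}+\neg\mathrm{CH}$ in \cite{BLMS_FM}: there the moduli for the various $\Z[\ku S_\alpha]_{c_0}$ were matched by a covering-of-category argument available only when $\aleph_1<\mathfrak{c}$, whereas here the matching is internalized into a single absolute well-founded tree and the contradiction is the purely combinatorial impossibility of a countable-rank tree having rank $\omega_1$.
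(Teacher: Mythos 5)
Your overall architecture matches the paper's: reduce to showing $c_0$ coarsely embeds via Aharoni's theorem, build a well-founded tree/relation generated by the $c_0$-basis in which the compression and expansion moduli are carried as part of each node (this is indeed the device that eliminates MA$+\neg$CH), invoke a boundedness principle to see that non-embeddability of $c_0$ forces a countable rank, and then derive a contradiction by showing that each coarse embedding of $\Z[\ku S_\alpha]_{c_0}$ pushes the rank past a prescribed countable ordinal. However, the step you yourself identify as the heart of the proof --- the rank comparison --- is where your sketch has a genuine gap, and the induction you propose would not go through as stated. First, there is no ``canonical inclusion $\Z[\ku S_\beta]_{c_0}\hookrightarrow \Z[\ku S_\alpha]_{c_0}$'' for $\beta<\alpha$: the Schreier families are not nested (e.g.\ $\{1,2\}\in\ku S_1\setminus\ku S_0$ shows already that containments only hold after shifting past some initial segment of $\N$, with the shift depending on $\beta$ and $\alpha$), so the transport step of your induction has no map to transport along. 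Second, and more structurally, an induction whose inductive hypothesis is merely the inequality $o(\mathbb{T}(N,X))\geqslant\zeta(\beta)$ cannot produce $o(\mathbb{T}(N,X))\geqslant\zeta(\alpha)$ at the next stage: to ``concatenate subtrees along admissible successions of blocks'' you need the inductive hypothesis to hand you a subtree of rank $\geqslant\zeta(\beta)$ sitting \emph{above a prescribed node} and carrying the \emph{same} moduli, which is exactly the uniformity issue you flag as the main obstacle and then do not resolve.

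The paper's route through this step is much more direct and is worth internalizing. Fix a single coarse embedding $\phi$ of $\Z[\ku S_\alpha]_{c_0}$ into $E$ with its single pair of moduli $(\kappa_\phi,\om_\phi)$, and extend $\phi$ arbitrarily to $\Z[e_1,e_2,\ldots]$. Because $\ku S_\alpha$ is hereditary, for every $A\in\ku S_\alpha$ the whole submodule $\Z[\{e_i\}_{i\in A}]$ lies inside $\Z[\ku S_\alpha]$, so the quadruple $\big(\kappa_\phi,\om_\phi,\{e_i\}_{i\in A},\phi\big)$ is a legitimate node of the relation ${\mathcal Coa}\big(\Z,(e_n),E\big)$. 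The assignment $A\mapsto\big(\kappa_\phi,\om_\phi,\{e_i\}_{i\in A},\phi\big)$ is then order-preserving from $(\ku S_\alpha,\prec^\star)$ into $({\mathcal Coa}\big(\Z,(e_n),E\big),<)$, precisely because the moduli and the map $\phi$ are held fixed along the entire copy of $\ku S_\alpha$. This immediately gives ${\sf rk}\big({\mathcal Coa}\big(\Z,(e_n),E\big),<\big)\geqslant{\sf rk}(\ku S_\alpha,\prec^\star)=\om^\alpha+1$, with the only transfinite induction occurring in the purely combinatorial computation of ${\sf rk}(\ku S_\alpha,\prec^\star)$, which involves no embeddings at all. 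No transport between different Schreier levels and no continuous cofinal map $\zeta$ is needed. One further caution: the paper's relation has uncountably many nodes (arbitrary moduli in $\ku N$ and arbitrary maps into $E$), so the countability of the rank in the well-founded case is not the elementary fact about countable trees you appeal to, but the boundedness theorem for analytic well-founded relations; if you insist on your discretized tree with rational moduli data, you must verify that ill-foundedness of the discretized tree is still equivalent to coarse embeddability, which you assert but do not check.
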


In \cite{BLMS_FM}, the proof (under MA+$\neg$CH) of the theorem above relied on a reformulation \`a la Bourgain of the universality problem. In particular, certain combinatorial trees on Polish spaces capturing the nonlinearity of the problem together with associated ordinal indices were introduced. In order for this approach to be successful in the coarse setting, it was necessary to extract an uncountable family of \emph{equi}-coarse embeddings out of an uncountable family of merely coarse embeddings. While the similar problem in the Lipschitz or linear settings can be solved with a simple pigeonhole principle, the argument for the coarse case required the additional set theoretic assumptions  mentioned above.

\begin{prop}[MA+$\neg$CH]\label{prop:equi-coarse}
If $(X_\alpha, d_\alpha)_{\alpha<\omega_1}$ is a collection of metric spaces such that for all $\alpha<\omega_1$, $X_\alpha$ embeds coarsely into a metric space $(M,d)$, then there exists an uncountable subset $C$ of $\omega_1$ such that $(X_\alpha, d_\alpha)_{\alpha\in C}$ embeds equi-coarsely into $(M,d)$.
\end{prop}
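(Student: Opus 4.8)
The plan is to reduce the statement to the classical fact that $\mathrm{MA}+\neg\mathrm{CH}$ implies $\omega_1<\mathfrak{b}$, where $\mathfrak{b}$ is the bounding number; equivalently, every family of $\omega_1$-many functions $\N\to\N$ is eventually dominated by a single function. First, for each $\alpha<\omega_1$ fix a coarse embedding $f_\alpha\colon X_\alpha\to M$ and replace its moduli by the optimal ones attached to $f_\alpha$: put $\tilde\rho_\alpha(t)=\inf\{d(f_\alpha(x),f_\alpha(y)):d_\alpha(x,y)\ge t\}$ (with $\inf\emptyset=\infty$) and $\tilde\omega_\alpha(t)=\sup\{d(f_\alpha(x),f_\alpha(y)):d_\alpha(x,y)\le t\}$. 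These are nondecreasing, $\tilde\rho_\alpha(t)\to\infty$, $\tilde\omega_\alpha(t)<\infty$ for all $t$, and $\tilde\rho_\alpha(d_\alpha(x,y))\le d(f_\alpha(x),f_\alpha(y))\le\tilde\omega_\alpha(d_\alpha(x,y))$. Finding an equi-coarse embedding on an uncountable set is exactly the problem of finding a single pair $(\rho,\omega)$ lying below all the $\tilde\rho_\alpha$ and above all the $\tilde\omega_\alpha$ for $\alpha$ ranging over that set, so all the relevant data is now carried by two $\omega_1$-indexed families of moduli.

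Next, discretize. Define $\Phi_\alpha\colon\N\to\N$ by $\Phi_\alpha(n)=\lceil\tilde\omega_\alpha(n)\rceil$ and $\Psi_\alpha\colon\N\to\N$ by letting $\Psi_\alpha(n)$ be the least integer $m$ with $\tilde\rho_\alpha(m)\ge n$ (finite because $\tilde\rho_\alpha(t)\to\infty$). Applying $\mathrm{MA}+\neg\mathrm{CH}$ to the combined family $\{\Phi_\alpha\}_{\alpha<\omega_1}\cup\{\Psi_\alpha\}_{\alpha<\omega_1}$, which has size $\le\omega_1<\mathfrak{b}$, we obtain a single nondecreasing, unbounded $g\colon\N\to\N$ with $\Phi_\alpha\le^* g$ and $\Psi_\alpha\le^* g$ for every $\alpha$ (here $\le^*$ denotes eventual domination).

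Now upgrade eventual domination to domination by two rounds of pigeonhole. For each $\alpha$ choose a threshold $N_\alpha\in\N$ beyond which $\Phi_\alpha(n)\le g(n)$ and $\Psi_\alpha(n)\le g(n)$; since there are uncountably many $\alpha$ and countably many possible thresholds, some uncountable $C_0\subseteq\omega_1$ has a common threshold $N$. On $C_0$ the assignment $\alpha\mapsto(\Phi_\alpha(n),\Psi_\alpha(n))_{n<N}$ takes values in the countable set $\N^{2N}$, so it is constant on an uncountable $C\subseteq C_0$; enlarging $g$ on the finitely many coordinates below $N$ and re-taking running maxima to keep it nondecreasing, we get a nondecreasing unbounded $g$ with $\Phi_\alpha(n)\le g(n)$ and $\Psi_\alpha(n)\le g(n)$ for all $n\in\N$ and all $\alpha\in C$. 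Finally set $\omega(t)=g(\lceil t\rceil)$ and $\rho(t)=\max\{n\in\N:g(n)\le t\}$ (with $\max\emptyset=0$). These are legitimate coarse moduli: $\omega$ is nondecreasing and finite-valued, and $\rho$ is nondecreasing with $\rho(t)\to\infty$ since $g$ is unbounded. A short check using the monotonicity of $\tilde\rho_\alpha,\tilde\omega_\alpha$ and the definitions of $\Phi_\alpha,\Psi_\alpha$ yields $\rho(d_\alpha(x,y))\le\tilde\rho_\alpha(d_\alpha(x,y))\le d(f_\alpha(x),f_\alpha(y))\le\tilde\omega_\alpha(d_\alpha(x,y))\le\omega(d_\alpha(x,y))$ for all $\alpha\in C$ and all $x,y\in X_\alpha$, which is precisely an equi-coarse embedding of $(X_\alpha,d_\alpha)_{\alpha\in C}$ into $(M,d)$ with common moduli $(\rho,\omega)$.

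The only place where the extra set theory enters is the single step $\omega_1<\mathfrak{b}$; all the rest is soft. The main obstacle is therefore conceptual rather than computational: one must recognize that the plain pigeonhole which suffices in the Lipschitz or linear settings — where a single modulus is merely rescaled by a real constant — genuinely fails here, because two coarse embeddings need not be comparable up to a multiplicative constant, so one is forced to compare the growth rates of arbitrary nondecreasing moduli, and an uncountable family of such functions can be unbounded in $\N^\N$ under $\mathrm{CH}$.
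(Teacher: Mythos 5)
Your proof is correct and follows essentially the same route as the paper: the paper also derives the proposition from $\mathfrak{b}>\aleph_1$ (a consequence of MA$+\neg$CH), dominating the expansion moduli and the generalized inverses of the compression moduli by a single function in $\N^\N$, and then using a pigeonhole on the thresholds of eventual domination to extract an uncountable subfamily with genuine domination. The only cosmetic difference is that you handle the finite initial segment by a second pigeonhole on the values below the common threshold, whereas the paper exploits monotonicity of the moduli to patch $f$ and $g$ there directly; both work.
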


In Section \ref{sec:emb-ranks}, we recall the definitions of the objects being considered in this article (e.g. coarse, uniform, bi-Lipschitz embeddings) and introduce various ordinal indices related to the corresponding notions of metric embeddability.
Our point of view relies on the notion of rank of well-founded binary relations on Polish spaces (instead of trees on Polish spaces as in \cite{BLMS_FM}). In order to avoid the use of additional axioms through Proposition \ref{prop:equi-coarse}, we incorporate the main characteristics of the metric embeddings into the definitions of the ordinal indices. The guaranteed uniform control on the moduli in the conclusion of Proposition \ref{prop:equi-coarse} becomes an automatic feature of the definition and consequently the use of additional set theoretic axioms is bypassed.
In Section \ref{sec:simple-emb-ranks}, we introduce some natural and easily defined embeddability ranks. However, estimating these ranks does not seem straightforward in practice and therefore, in Section \ref{sec:schauder-ranks}, we define embeddability ranks induced by Schauder bases. These ranks are similar to the nonlinear indices introduced in \cite{BLMS_FM} but are more general and most importantly, as hinted at above, incorporate the compression and expansion moduli into their definitions. As in \cite{BLMS_FM}, these ranks are easily estimated and lead to the same results without the use of additional set theoretic assumptions.
 
The extraction of subcollections of embeddings with uniform control on their moduli of expansion and compression is an interesting problem in its own right and, in the last section, we generalize Proposition \ref{prop:equi-coarse} by relating this type of problems to certain cardinal invariants of the continuum, in particular the bounding number $\mathfrak{b}$. Our proof is arguably more transparent than the one in \cite{BLMS_FM}.


\section{Embeddability ranks}\label{sec:emb-ranks}

\subsection{Functional moduli}
The various notions of embeddability we will consider can all be described in terms of functional moduli, which we now proceed to define. For this purpose, let 
$$
\ku Z=\{\ldots, \tfrac 13, \tfrac 12, 1, 2,3, \ldots\}
$$
and set 
$$
\ku N=\{\kappa\colon \ku Z \to [0,\infty]\del \kappa \text{ is non-decreasing } \}.
$$
Observe that $\ku N$ can be seen as a closed subset of the Polish space $[0,\infty]^\ku Z$, where we view  $[0,\infty]$ as  the one-point compactification of $[0,\infty)$. 

When $\phi\colon X\to E$ is a function between two non-empty metric spaces, we define the compression modulus $\kappa_\phi\in \ku N$ and the expansion modulus $\om_\phi\in \ku N$ by
$$
\kappa_\phi(r)= \inf\{d_E\big(\phi(x),\phi(y)\big)\del d_X(x,y)\geqslant r\},\quad r\in \ku Z
$$
and
$$
\om_\phi(r)= \sup\{d_E\big(\phi(x),\phi(y)\big)\del d_X(x,y)\leqslant r\},\quad  r\in \ku Z.
$$
Remark that, if $X$ is bounded, there may be no $x,y\in X$ for which $d_X(x,y)\geqslant r$, which means that $\kappa_\phi(r)=\inf\tom=\infty$. So, for any $r\in \ku Z$, 
$$
\kappa_\phi(r)<\infty \;\equi\; \e x,y\in X\;\; d_X(x,y)\geqslant r.
$$
We need one more piece of notation. For any real number $t>0$,  let $t_-$ and $t_+$ be respectively the largest and smallest element of $\ku Z$ so that
$$
t_-\leqslant t\leqslant t_+
$$
and observe that, for all $t>0$, we have $\frac t2\leqslant t_-\leqslant t\leqslant t_+\leqslant 2t$. 
By definition of $\kappa_\phi$ and $\om_\phi$,  we note that
$$
\kappa_\phi\big( d(x,y)_-\big)\leqslant d\big(\phi(x),\phi(y)\big)\leqslant\om_\phi \big( d(x,y)_+\big) 
$$
for all $x,y\in X$.

As noted above, the various notions of embeddability between metric spaces we shall consider are defined by  imposing different restrictions of the compression and expansion moduli of a function $\phi\colon X\to E$. We therefore define 
$$
\ku M_{\text{coarse}}=
\{(\kappa,\om)\in \ku N\times \ku N\del \om(r)<\infty\text{ for all }r\in \ku Z \text{ and }\lim_{n\to \infty}\kappa(n)=\infty\},
$$
$$
\ku M_{\text{uniform}}=
\{(\kappa,\om)\in \ku N\times \ku N\del 0<\kappa(r)\text{ for all }r\in \ku Z \text{ and }\lim_{n\to \infty}\om(\tfrac1n)=0\}
$$
and finally
$$
\ku M_{\text{Lipschitz}}=\{(\kappa,\om)\in\ku N\times \ku N\del  \kappa(r)\geqslant cr,\;  \om(r)\leqslant Cr \text{ for some  }0<c<C<\infty\}.
$$
A map $\phi\colon X\to E$ is then called a {\em coarse}, {\em uniform} or {\em bi-Lipschitz embedding} according to whether $(\kappa_\phi,\om_\phi)$ belongs to $\ku M_{\text{coarse}}$, $\ku M_{\text{uniform}}$ or to $\ku M_{\text{Lipschitz}}$.


\subsection{Simple embeddability ranks}\label{sec:simple-emb-ranks}
Suppose $X$ and $E$  are metric spaces and that $D=\{x_1,x_2, \ldots\}$ is a fixed enumeration of a countable dense subset of $X$. We then let $\Omega\big({D,E}\big) $ be the collection of all quadruples 
$(\kappa,\om, k,\phi)  \in    \ku N\times \ku N \times \N \times E^{D}$ so that
$$
\kappa\big( d(x,y)_-\big)\leqslant d\big(\phi(x),\phi(y)\big)\leqslant\om \big( d(x,y)_+\big) 
$$
for all distinct $x,y\in \{x_1,x_2, \ldots, x_k\}$. Let also
$$
{\mathcal Coa}\big({D,E}\big)=\{(\kappa,\om, k,\phi)  \in  \Omega\big({D,E}\big) \del (\kappa,\om)\in \ku M_{\text{coarse}}\},
$$
$$
{\mathcal Uni}\big({D,E}\big)=\{(\kappa,\om, k,\phi)  \in  \Omega\big({D,E}\big) \del (\kappa,\om)\in \ku M_{\text{uniform}}\},
$$
$$
{\mathcal Lip}\big({D,E}\big)=\{(\kappa,\om, k,\phi)  \in  \Omega\big({D,E}\big) \del (\kappa,\om)\in \ku M_{\text{Lipschitz}}\}.
$$
Finally, we define a binary relation $<$ on $\Omega\big({D,E}\big) $ by letting 
\maths{
(\kappa, \om, k, \phi) <&(\kappa', \om' , k', \phi') \;\equi\; \\
& (\kappa,\om)=(\kappa',\om')\;\;\&\;\; k=k'+1 \;\;\&\;\;\phi\upharpoonright_{\{x_1,x_2, \ldots, x_{k'}\}}=\phi'\upharpoonright_{\{x_1,x_2, \ldots, x_{k'}\}},
}
where $\psi \upharpoonright_{A}$ denotes the restriction of a map $\psi$ to a subset $A$.

Recall that a binary relation $\prec$ on a set $A$ is said to be  {\em ill-founded} if and only if  there is an infinite sequence $a_1,a_2,\ldots\in A$ so that 
$$
\cdots\prec a_3\prec a_2\prec a_1.
$$
On the other hand, if there is no such infinite sequence, we say that $\prec$ is {\em well-founded}. Observe that another way of stating that $\prec$ is well-founded is by saying that every non-empty subset $B\subseteq A$ has a {\em minimal} element $x\in B$, i.e., so that $y\not\prec x$ for all $y\in B$.

\begin{lemme}\label{lem:elementary}
The binary relation $<$ is ill-founded on any of the spaces 
$$
{\mathcal Coa}\big({D,E}\big),\qquad 
{\mathcal Uni}\big({D,E}\big),
\qquad 
{\mathcal Lip}\big({D,E}\big)
$$ if and only if  $X$ coarsely embeds, uniformly embeds, respectively  bi-Lipschitz embeds into $E$.
\end{lemme}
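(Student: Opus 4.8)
The plan is to prove both directions of the equivalence by relating infinite descending chains in $<$ to single functions $\phi\colon X\to E$ together with a fixed pair of moduli. The key observation is that an infinite $<$-descending sequence in $\Omega(D,E)$ is, by the very definition of $<$, a sequence $(\kappa,\om,k_n,\phi_n)$ in which the first two coordinates $(\kappa,\om)$ are \emph{constant}, the integers $k_n$ are strictly decreasing --- wait, that cannot be, since $\N$ is well-ordered; rather, reading the definition correctly, $(\kappa,\om,k,\phi)<(\kappa',\om',k',\phi')$ forces $k=k'+1$, so along a descending chain $\cdots < a_3 < a_2 < a_1$ the index \emph{increases}: if $a_1=(\kappa,\om,k_1,\phi_1)$ then $a_n=(\kappa,\om,k_1+n-1,\phi_n)$, with the $\phi_n$ coherent on initial segments $\{x_1,\dots,x_{k_1+n-2}\}$. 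Thus an infinite descending chain is precisely a choice of $(\kappa,\om)$ in the relevant class together with a \emph{coherent sequence} of partial maps defined on larger and larger finite subsets of $D$, whose union is a map $\phi\colon D\to E$ satisfying the two-sided modulus inequality $\kappa(d(x,y)_-)\leqslant d(\phi(x),\phi(y))\leqslant \om(d(x,y)_+)$ for \emph{all} distinct $x,y\in D$.

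\textbf{Forward direction (ill-founded $\Rightarrow$ embedding).} Given an infinite $<$-descending chain in, say, ${\mathcal Coa}(D,E)$, extract $(\kappa,\om)\in\ku M_{\text{coarse}}$ and the limiting map $\phi\colon D\to E$ as above. First I would observe that $\phi$ is uniformly continuous on $D$ (since $\om$ is finite-valued and $\om(\tfrac1n)$ controls small distances --- actually for the coarse case we only need the expansion bound to produce a coarse map, and the $\kappa(n)\to\infty$ clause gives properness), hence extends --- for the coarse case one does not even need continuity, but to pass from $D$ to all of $X$ I would use that $D$ is dense and that $\kappa$, $\om$ are themselves non-decreasing with the good limiting behavior: define $\phi$ on $X$ by taking limits along sequences in $D$, checking the extension is well-defined and still satisfies the modulus inequalities with the \emph{same} $(\kappa,\om)$ up to the harmless factor-$2$ slack in $d(x,y)_\pm$. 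Then $(\kappa_{\phi},\om_{\phi})$ dominates/is dominated appropriately by $(\kappa,\om)$, so $(\kappa_\phi,\om_\phi)$ still lies in $\ku M_{\text{coarse}}$ (resp. $\ku M_{\text{uniform}}$, $\ku M_{\text{Lipschitz}}$, using that each $\ku M$-class is an up-set in $\kappa$ and a down-set in $\om$ in the obvious sense), and $\phi$ is a coarse (resp. uniform, bi-Lipschitz) embedding.

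\textbf{Reverse direction (embedding $\Rightarrow$ ill-founded).} Conversely, given a coarse embedding $\phi\colon X\to E$, set $\kappa=\kappa_\phi$ and $\om=\om_\phi$; by definition $(\kappa,\om)\in\ku M_{\text{coarse}}$ and the inequality $\kappa_\phi(d(x,y)_-)\leqslant d(\phi(x),\phi(y))\leqslant\om_\phi(d(x,y)_+)$ holds for all $x,y\in X$, in particular for all $x,y\in D$. Then for each $k\in\N$ the quadruple $(\kappa,\om,k,\phi\upharpoonright D)$ lies in ${\mathcal Coa}(D,E)$ (extending $\phi\upharpoonright D$ arbitrarily to an element of $E^D$ is unnecessary since it is already defined on all of $D$), and $\cdots<(\kappa,\om,3,\phi\upharpoonright D)<(\kappa,\om,2,\phi\upharpoonright D)<(\kappa,\om,1,\phi\upharpoonright D)$ is an infinite descending chain. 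The same argument works verbatim for ${\mathcal Uni}$ and ${\mathcal Lip}$.

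\textbf{Main obstacle.} The only genuinely non-trivial point is the extension step in the forward direction: passing from a map defined on the dense set $D$ to one defined on all of $X$ while keeping control of \emph{both} moduli in the relevant class. For the bi-Lipschitz and uniform cases, uniform continuity of $\phi$ on $D$ (which follows from the finiteness/vanishing behavior of $\om$) guarantees a unique continuous extension, and one checks the modulus inequalities survive the limit because both sides are controlled by the monotone functions $\kappa,\om$ evaluated at nearby arguments, with the factor-$2$ discretization slack absorbed harmlessly. For the coarse case, $\phi\upharpoonright D$ need not be continuous, so instead I would either (a) note that a coarse embedding of a dense subset extends to a coarse embedding of the whole space by a standard net/approximation argument, or (b) simply replace $X$ by the isometric issue: every point of $X$ is a limit of points of $D$, and coarse maps are insensitive to bounded perturbations, so pushing forward a Cauchy net and picking any accumulation point in a completion suffices — one must verify the resulting $(\kappa_\phi,\om_\phi)$ still has $\kappa_\phi(n)\to\infty$ and $\om_\phi$ finite-valued, which follows since these properties only depend on asymptotic behavior unaffected by the dense approximation. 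I expect this extension lemma to be the place where a careful $\epsilon$-argument is needed; everything else is bookkeeping with the definition of $<$.
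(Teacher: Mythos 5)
Your proposal is correct and follows essentially the same route as the paper: restriction of a given embedding to $D$ yields the descending chain in one direction, and in the other direction the coherent partial maps along a chain glue to a single $\phi\colon D\to E$ satisfying the two-sided modulus inequality, which is then extended to $X$. The only point where you hedge --- the extension from $D$ to $X$ in the coarse case --- is resolved in the paper by the bounded-perturbation idea you mention (set $\phi(x):=\phi(x')$ for any $x'\in D$ with $d(x,x')<\tfrac12$, which merely shifts the arguments of $\kappa$ and $\om$ by $\pm 1$), not by Cauchy nets or accumulation points in a completion, which would fail since a coarse $\phi\upharpoonright_D$ need not send Cauchy sequences to convergent ones.
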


\begin{proof}
We give the proof for the case of coarse embeddings, the two other cases being entirely analogous. 
So suppose first that $\phi \colon X\to E$ is a coarse embedding and hence that $(\kappa_\phi,\om_\phi)\in \ku M_{\text{coarse}}$.
Furthermore, 
$$
\kappa_\phi\big( d(x,y)_-\big)\leqslant d\big(\phi(x),\phi(y)\big)\leqslant\om_\phi \big( d(x,y)_+\big)
$$
for all distinct $x,y\in X$. Letting $\psi=\phi\upharpoonright_D$, we have $(\kappa_\phi,\om_\phi,k,\psi)\in {\mathcal Coa}\big({D,E}\big)$ for all $k$ and 
$$
\dots < (\kappa_\phi,\om_\phi,  3, \psi) < (\kappa_\phi,\om_\phi,  2, \psi) < (\kappa_\phi,\om_\phi, 1, \psi)
$$
is an infinite descending sequence in ${\mathcal Coa}\big({D,E}\big)$, showing  ill-foundedness.

Conversely, suppose that $<$ is ill-founded on the set ${\mathcal Coa}\big({D,E}\big)$. This means that there is some $(\kappa,\om)\in \ku M_{\text{coarse}}$, a natural number $k_0$, and functions $\phi_k\colon D\to E$ for $k\geqslant k_0$ so that, for all $k\geqslant k_0$, 
$$
\phi_{k+1} \upharpoonright_{\{x_1,x_2, \ldots, x_{k}\}}=\phi_k\upharpoonright_{\{x_1,x_2, \ldots, x_{k}\}}
$$
and 
$$
\kappa\big( d_X(x,y)_-\big)\leqslant d_E\big(\phi_k(x),\phi_k(y)\big)\leqslant\om \big( d_X(x,y)_+\big)
$$
for all distinct $x,y\in \{x_1,x_2, \ldots, x_{k}\}$. It follows that there is a single function $\phi\colon D\to E$ extending the restrictions $\phi_k\upharpoonright_{\{x_1,x_2, \ldots, x_{k}\}}$ and satisfying
$$
\kappa\big( d_X(x,y)_-\big)\leqslant d_E\big(\phi(x),\phi(y)\big)\leqslant\om \big( d_X(x,y)_+\big)
$$
for all distinct $x,y\in D$. In particular, for all $n\geqslant 1$ and $x,y\in D$, we have that
$$
d_X(x,y)\geqslant  n\;\saa\;  d_E\big(\phi(x),\phi(y)\big)\geqslant \kappa\big( d_X(x,y)_-\big) \geqslant \kappa (n)
$$
and 
$$
d_X(x,y)\leqslant n \;\saa\; d_E\big(\phi(x),\phi(y)\big) \leqslant \om \big( d_X(x,y)_+\big) \leqslant \om (n).
$$

Because $D$ is dense in $X$, we can now extend $\phi$ to a map $\phi\colon X\to E$ as follows. For $x\in X\setminus  D$, pick an arbitrary point $x'\in D$ so that $d_X(x',x)<\frac 12$ and set
$$
\phi(x)=\phi(x').
$$
Then we still have 
$$
d_X(x,y)\geqslant  n+1\;\saa\;  d_E\big(\phi(x),\phi(y)\big) \geqslant \kappa (n)
$$
and 
$$
d_X(x,y)\leqslant n-1 \;\saa\; d_E\big(\phi(x),\phi(y)\big) \leqslant \om (n).
$$
As  $\lim_{n\to \infty}\kappa(n)=\infty$ and $\om(n)<\infty$ for all $n$, it follows that $\phi$ is a coarse embedding. 
\end{proof}

Recall that, if $\prec$ is a well-founded relation on a set $A$, we may define an ordinal-valued rank function $\rho\colon A\to {\sf Ord}$ by letting
$$
\rho(x)=\sup\{\rho(y)+1\del y\prec x\}
$$
for all $x\in A$. Thus, $\rho(x)=0$ if and only if $x\in A$ is minimal, that is, if $y\not\prec x$ for all $y\in A$. Similarly, $\rho(x)\geqslant \omega$ if and only if, for all $m\geqslant 1$, one may find $x_1,\ldots,x_m\in A$ so that 
$$
x_1\prec x_2\prec \dots \prec x_m\prec x.
$$
The image $\rho[A]=\{\rho(x)\del x\in A\}$ of the rank function $\rho$ associated with a particular well-founded relation $\prec$ on a set $A$ is always an initial segment of the ordinals and hence is an ordinal itself. That allows us to define the {\em rank} by
$$
{\sf rk}(A,\prec)=\rho[A]=\sup\{\rho(x)+1\del x\in A\}\in {\sf Ord}.
$$

Let us now recall the well-known boundedness theorem for analytic well-founded relations (see \cite[Theorem 31.1]{Kechris}). For this, let us say that a binary relation $\prec$ on a Polish space $A$ is {\em analytic} if it is analytic when viewed as a subset of $A\times A$.

\begin{thm}\label{boundedness}
Suppose $\prec$ is an analytic, well-founded, binary relation on a  Borel subset of a Polish space $A$. Then the rank ${\sf rk}(A,\prec)$ is countable.
\end{thm}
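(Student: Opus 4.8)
The plan is to carry out the classical Kunen--Martin argument. The first step is a reduction: we regard $\prec$ as a well-founded analytic relation on the whole Polish space $A$ --- which is how ${\sf rk}(A,\prec)$ is defined anyway --- so the Borel set plays no further role; then, fixing a continuous surjection $\pi\colon\omega^\omega\to A$ from Baire space, we pull $\prec$ back to the relation $\prec'$ on $\omega^\omega$ defined by $\alpha\prec'\beta\equi\pi(\alpha)\prec\pi(\beta)$. One checks that $\prec'$ is analytic (a continuous preimage of an analytic set), that it is well-founded (an infinite $\prec'$-decreasing sequence projects via $\pi$ to an infinite $\prec$-decreasing one), and that a straightforward induction on $\rho_\prec$, using surjectivity of $\pi$, yields $\rho_\prec(\pi(\alpha))\leqslant\rho_{\prec'}(\alpha)$ for all $\alpha$, whence ${\sf rk}(A,\prec)\leqslant{\sf rk}(\omega^\omega,\prec')$. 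So it suffices to treat the case $A=\omega^\omega$.

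Next I would replace $\prec$ by a well-founded tree on $\omega$. Let $\mathrm{Seq}$ be the set of non-empty finite $\prec$-decreasing sequences $(x_0,\dots,x_n)$ in $A$, equipped with the relation $\sqsupset$ of proper end-extension (so $u\sqsupset v$ iff $u$ properly extends $v$); this relation is well-founded, since an infinite $\sqsupset$-decreasing sequence would be an infinite $\prec$-decreasing one. A short induction shows that the rank of $(x_0,\dots,x_n)$ in $(\mathrm{Seq},\sqsupset)$ equals $\rho_\prec(x_n)$, and, since every point of $A$ is the last coordinate of some member of $\mathrm{Seq}$, it follows that ${\sf rk}(\mathrm{Seq},\sqsupset)={\sf rk}(A,\prec)$. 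Now I invoke analyticity: write $\prec=p[U]$ for a tree $U$ on $\omega\times\omega\times\omega$ ($p$ being the projection), so that $\alpha\prec\beta$ holds iff $(\alpha\begr n,\beta\begr n,\gamma\begr n)\in U$ for all $n$, for some $\gamma\in\omega^\omega$. Using a fixed bijection between $\omega$ and $\omega^{<\omega}$, one builds a tree $W$ on $\omega$ whose nodes encode finite approximations to sequences $\big((\alpha_i,\gamma_i)\big)_i$ with $(\alpha_{i+1}\begr n,\alpha_i\begr n,\gamma_i\begr n)\in U$ for all $i,n$; an infinite branch of $W$ then decodes precisely to an infinite $\prec$-decreasing sequence $(\alpha_i)$ together with $U$-witnesses $(\gamma_i)$, so by well-foundedness of $\prec$ the tree $W$ has no infinite branch, i.e.\ $(W,\sqsupset)$ is well-founded.

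To finish, I would compare ranks. To each $(x_0,\dots,x_n)\in\mathrm{Seq}$ assign a node $g(x_0,\dots,x_n)\in W$: choose $U$-witnesses $\gamma_i$ for $x_{i+1}\prec x_i$, and let $g(x_0,\dots,x_n)$ encode the first $n-i$ entries of $x_i$ and of $\gamma_i$ for each $i$, read off through the fixed bijection. With the coding arranged so that lengthening a $\prec$-decreasing sequence lengthens its node, $g$ is a homomorphism from $(\mathrm{Seq},\sqsupset)$ to $(W,\sqsupset)$, and this forces ${\sf rk}(A,\prec)={\sf rk}(\mathrm{Seq},\sqsupset)\leqslant{\sf rk}(W,\sqsupset)$ by the standard monotonicity of rank under homomorphisms of well-founded relations. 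Finally, $W$ has only countably many nodes, so the rank function of $(W,\sqsupset)$ has countable range; being an initial segment of the ordinals (as recalled above), that range is a countable ordinal. Hence ${\sf rk}(A,\prec)<\omega_1$.

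I expect the one genuinely delicate step to be the construction of $W$: arranging the diagonal coding so that $W$ is \emph{literally} a tree on $\omega$, so that its infinite branches are exactly the witnessed infinite $\prec$-decreasing sequences, and so that the node-assignment $g$ respects the extension order. The remaining steps are routine transfinite inductions. Alternatively, one may simply invoke the Kunen--Martin theorem, or the boundedness theorem for the canonical $\Pi^1_1$-rank on the set of well-founded trees on $\omega$, each of which repackages precisely this argument; the statement itself is \cite[Theorem 31.1]{Kechris}.
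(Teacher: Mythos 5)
Your proposal is correct: it is a faithful reconstruction of the Kunen--Martin argument, and the paper itself offers no proof of this statement beyond the citation to \cite[Theorem 31.1]{Kechris}, which is precisely the theorem you reprove (and which, as you note, one may simply invoke). The one point genuinely needing care --- fixing the $U$-witness $\gamma$ for each pair $x\prec y$ once and for all, so that the node assignment $g$ is coherent under end-extension and hence a homomorphism --- is exactly the step you flag, and your sketch handles it correctly.
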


The following theorem then connects Theorem \ref{boundedness} with Lemma \ref{lem:elementary}.

\begin{thm}\label{thm:elementary}
Suppose $X$ and $E$ are separable complete metric spaces and $D=\{x_1,x_2, \ldots\}$ is a fixed enumeration of a dense subset of $X$. Then either $X$ coarsely embeds into $E$ or the rank
    $$
    {\sf rk}\big({\mathcal Coa}\big({D,E}\big), <\big)
    $$
    is countable.
\end{thm}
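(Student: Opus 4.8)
The plan is to feed Lemma~\ref{lem:elementary} into the boundedness theorem, Theorem~\ref{boundedness}. By Lemma~\ref{lem:elementary}, if $X$ does not coarsely embed into $E$, then $<$ is a well-founded relation on ${\mathcal Coa}(D,E)$, so it suffices to check that ${\mathcal Coa}(D,E)$ is a Borel subset of a Polish space and that $<$ is analytic; Theorem~\ref{boundedness} then hands us a countable rank. Thus the dichotomy in the statement is really Lemma~\ref{lem:elementary} plus Theorem~\ref{boundedness}, glued together by a routine complexity computation.

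First I would fix the ambient Polish space. Since $E$ is separable and complete, the countable power $E^D$ is Polish; together with the discrete space $\N$ and the closed, hence Polish, subspace $\ku N$ of $[0,\infty]^{\ku Z}$, this makes $\ku N\times\ku N\times\N\times E^D$ Polish. Next I would verify that $\Omega(D,E)$ is Borel: on the clopen slice where the $\N$-coordinate equals a fixed $n$, the defining constraint
\[
\kappa\big(d(x,y)_-\big)\leqslant d\big(\phi(x),\phi(y)\big)\leqslant\om\big(d(x,y)_+\big)
\]
over the finitely many distinct pairs $x,y\in\{x_1,\dots,x_n\}$ is a \emph{closed} condition, because each of the maps $(\kappa,\om,\phi)\mapsto\kappa(d(x,y)_-)$, $\mapsto d(\phi(x),\phi(y))$, $\mapsto\om(d(x,y)_+)$ is continuous (a coordinate evaluation, composed where needed with the continuous metric of $E$) and $\{(a,b,c)\in[0,\infty]^3\del a\leqslant b\leqslant c\}$ is closed. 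Taking the union over $n$ shows $\Omega(D,E)$ is Borel. Then ${\mathcal Coa}(D,E)=\Omega(D,E)\cap(\ku M_{\text{coarse}}\times\N\times E^D)$, and $\ku M_{\text{coarse}}$ is Borel in $\ku N\times\ku N$: the clause ``$\om(r)<\infty$ for all $r\in\ku Z$'' is $G_\delta$ since $[0,\infty)$ is open in $[0,\infty]$ and the evaluations are continuous, while ``$\lim_n\kappa(n)=\infty$'' equals $\bigcap_M\bigcup_N\{\kappa\del\kappa(N)\geqslant M\}$ by monotonicity of $\kappa$, a countable boolean combination of closed sets. Hence ${\mathcal Coa}(D,E)$ is Borel.

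It then remains to see that $<$ is Borel, a fortiori analytic, as a subset of $\big(\ku N\times\ku N\times\N\times E^D\big)^2$. The conjunct $(\kappa,\om)=(\kappa',\om')$ is closed (a diagonal), $k=k'+1$ is clopen in $\N^2$, and on the clopen slice where $k'=m$ is fixed, the condition $\phi\upharpoonright_{\{x_1,\dots,x_m\}}=\phi'\upharpoonright_{\{x_1,\dots,x_m\}}$ reduces to the finitely many closed conditions $\phi(x_i)=\phi'(x_i)$ for $i\leqslant m$; summing over $m$ gives that $<$ is Borel. With all hypotheses of Theorem~\ref{boundedness} in place, its conclusion yields that ${\sf rk}\big({\mathcal Coa}(D,E),<\big)$ is countable whenever $X$ fails to embed coarsely into $E$, as desired.

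I do not anticipate a real obstacle: the only work is the descriptive bookkeeping above, the two mildly delicate points being to treat the ``$\lim_n\kappa(n)=\infty$'' clause through the monotonicity of $\kappa$, and to be careful that slicing over the discrete coordinate $\N$ genuinely preserves Borelness at each stage.
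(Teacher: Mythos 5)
Your proposal is correct and follows essentially the same route as the paper: establish that ${\mathcal Coa}(D,E)$ is a Borel subset of the Polish space $\ku N\times\ku N\times\N\times E^D$ and that $<$ is closed (hence analytic), then combine Lemma \ref{lem:elementary} with the boundedness theorem, Theorem \ref{boundedness}. The only difference is that you spell out the descriptive-set-theoretic bookkeeping that the paper leaves implicit (the paper even notes the slightly stronger fact that $\Omega(D,E)$ is closed, which your slice-by-slice argument also yields since the $\N$-coordinate is discrete).
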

Similar results hold for uniform and bi-Lipschitz embeddability, and our proof applies to all of the three cases.
    
\begin{proof}   
Because $D$ is countable and $E$ is Polish, the set
$\Omega\big(D,E\big)$ is a closed subset of the Polish space $\ku N\times \ku N\times \N\times E^{D}$. Also, $\ku M_{\text{coarse}}$, $\ku M_{\text{uniform}}$, and $\ku M_{\text{Lipschitz}}$, are all Borel sets in $\ku N\times \ku N$, whereby each of the sets 
    $$
{\mathcal Coa}\big({D,E}\big),\qquad 
{\mathcal Uni}\big({D,E}\big),
\qquad 
{\mathcal Lip}\big({D,E}\big)
$$
are Borel subsets of the Polish space $\ku N\times \ku N\times \N\times E^{D}$. Furthermore,  the relation $<$ is closed as a subset of $\Omega\big(D,E\big)$. The theorem now follows from Theorem \ref{boundedness} and Lemma \ref{lem:elementary}.
\end{proof}


\subsection{Embeddability ranks based on Schauder bases}\label{sec:schauder-ranks}

In this section we define embeddability ranks that are a little more delicate to define but are easier to estimate from below. 

Let $R$ be a ring and denote by $R[e_1,e_2,\ldots]$ the free $R$-module with basis elements $e_1,e_2, \ldots$. Also, if $A\subseteq \{e_1,e_2,\ldots\}$, we let $R[A]$ denote the submodule of $R[e_1,e_2,\ldots]$ spanned by $A$. For two finite subsets $A,B\subseteq \{e_1,e_2,\ldots\}$, we write 
$$
A\prec^\star B
$$ 
to denote that $B=\{e_{n_1},\ldots, e_{n_k}\}$ and $A=\{e_{n_1}, \ldots, e_{n_k}, e_{n_{k+1}}\}$ for some $n_1<\ldots<n_k<n_{k+1}$.

Suppose now that $(e_n)_{n=1}^\infty$ is a basic sequence in a real Banach space $(X,\norm{\cdot})$. Let also the ring $R$ be either $\Z$ or $\Q$, in which case, $R[e_1,e_2,\ldots]\subseteq X$. We thus view $R[e_1,e_2,\ldots]$ as a metric space with the metric induced by the norm on $X$. Assume that $E$ is a metric space and define $\Theta\big(R,(e_n),E)$ as the collection of all quadruples $(\kappa,\omega, A, \phi)$ where $\kappa, \omega\in \ku N$, $A$ is a finite subset of $\{e_1,e_2,\ldots\}$ and  $\phi$ is a function $R[e_1,e_2,\ldots]\maps \phi E$ so that
$$
\kappa\big( \norm{x-y}_-\big)\leqslant d_E\big(\phi(x),\phi(y)\big)\leqslant\om \big( \norm{x-y}_+\big)
$$
for all distinct $x,y\in R[A]$. As in the preceding section, we let
$$
{\mathcal Coa}\big({R,(e_n),E}\big)=\{(\kappa,\om, A,\phi)  \in  \Theta\big(R,(e_n),E\big) \del (\kappa,\om)\in \ku M_{\text{coarse}}\},
$$
$$
{\mathcal Uni}\big({R,(e_n),E}\big)=\{(\kappa,\om, A,\phi)  \in \Theta\big(R,(e_n),E\big) \del (\kappa,\om)\in \ku M_{\text{uniform}}\},
$$
$$
{\mathcal Lip}\big({R,(e_n),E}\big)=\{(\kappa,\om, A,\phi)  \in \Theta\big(R,(e_n),E\big) \del (\kappa,\om)\in \ku M_{\text{Lipschitz}}\}
$$
and define a binary relation $<$ on $\Theta\big(R,(e_n),E\big)$ by setting
\maths{
(\kappa, \om, A, \phi) <(\kappa', \om' , A', &\phi') \;\equi\; \\
& (\kappa,\om)=(\kappa',\om')\;\;\&\;\; A\prec^\star A'  \;\;\&\;\;\phi\upharpoonright_{R[A']}=\phi'\upharpoonright_{R[A']}.
}
In analogy we Lemma \ref{lem:elementary}, we now have the following.

\begin{lemme}\label{lem:module}
The binary relation $<$ is ill-founded on any of the spaces 
$$
{\mathcal Coa}\big({R,(e_n),E}\big),\qquad 
{\mathcal Uni}\big({R,(e_n),E}\big),
\qquad 
{\mathcal Lip}\big({R,(e_n),E}\big)
$$ 
if and only if  there is an infinite subsequence $e_{n_1}, e_{n_2}, e_{n_3}, \ldots$ of the basis of $X$ so that $R[e_{n_1}, e_{n_2}, e_{n_3}, \ldots]$ coarsely embeds, uniformly embeds, respectively  bi-Lipschitzly embeds into $E$.
\end{lemme}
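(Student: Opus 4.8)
The plan is to mirror the proof of Lemma~\ref{lem:elementary}, the only new feature being that the relation $\prec^\star$ adds basis vectors from a fixed enumeration in increasing order, so an infinite descending $<$-chain no longer exhausts a fixed set $D$ but instead builds up a submodule $R[e_{n_1},e_{n_2},\ldots]$ indexed by some strictly increasing sequence $n_1<n_2<\cdots$.

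\begin{proof}
Again we treat the coarse case, the others being identical. Suppose first that there is a subsequence $(e_{n_j})_{j=1}^\infty$ of the basis with $R[e_{n_1},e_{n_2},\ldots]$ coarsely embedding into $E$, say via $\phi_0\colon R[e_{n_1},e_{n_2},\ldots]\to E$ with $(\kappa_{\phi_0},\om_{\phi_0})\in \ku M_{\text{coarse}}$. Extend $\phi_0$ arbitrarily to a function $\phi\colon R[e_1,e_2,\ldots]\to E$ and put $A_k=\{e_{n_1},\ldots,e_{n_k}\}$. Since $R[A_k]\subseteq R[e_{n_1},e_{n_2},\ldots]$, for all distinct $x,y\in R[A_k]$ we have
$$
\kappa_{\phi_0}\big(\norm{x-y}_-\big)\leqslant d_E\big(\phi(x),\phi(y)\big)\leqslant \om_{\phi_0}\big(\norm{x-y}_+\big),
$$
so $(\kappa_{\phi_0},\om_{\phi_0},A_k,\phi)\in {\mathcal Coa}(R,(e_n),E)$ for every $k$; moreover $A_{k+1}\prec^\star A_k$ and the three maps agree on $R[A_k]\supseteq R[A_{k+1}]$ (indeed they are all equal to $\phi$), so
$$
\cdots<(\kappa_{\phi_0},\om_{\phi_0},A_3,\phi)<(\kappa_{\phi_0},\om_{\phi_0},A_2,\phi)<(\kappa_{\phi_0},\om_{\phi_0},A_1,\phi)
$$
witnesses ill-foundedness.

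Conversely, assume $<$ is ill-founded on ${\mathcal Coa}(R,(e_n),E)$, and fix a descending chain $(\kappa,\om,A_k,\phi_k)_{k\geqslant 1}$; by definition of $<$ the first two coordinates are a fixed pair $(\kappa,\om)\in\ku M_{\text{coarse}}$, we have $A_{k+1}\prec^\star A_k$ for all $k$, and $\phi_{k+1}\upharpoonright_{R[A_k]}=\phi_k\upharpoonright_{R[A_k]}$. From $A_{k+1}\prec^\star A_k$ we read off that $A_1\subseteq A_2\subseteq\cdots$ with $|A_{k+1}|=|A_k|+1$, and that the enumerations are compatible, i.e.\ there is a strictly increasing sequence $n_1<n_2<\cdots$ with $A_k=\{e_{n_1},\ldots,e_{n_{m}}\}$ for $m=|A_k|$; in particular $\bigcup_k A_k=\{e_{n_1},e_{n_2},\ldots\}$ and $\bigcup_k R[A_k]=R[e_{n_1},e_{n_2},\ldots]$. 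The compatibility of the restrictions lets us glue the $\phi_k$ into a single map $\phi\colon R[e_{n_1},e_{n_2},\ldots]\to E$ with
$$
\kappa\big(\norm{x-y}_-\big)\leqslant d_E\big(\phi(x),\phi(y)\big)\leqslant \om\big(\norm{x-y}_+\big)
$$
for all distinct $x,y\in R[e_{n_1},e_{n_2},\ldots]$, since any two such points already lie in a common $R[A_k]$. Exactly as in Lemma~\ref{lem:elementary} this inequality, together with $\lim_n\kappa(n)=\infty$ and $\om(n)<\infty$ for all $n$, shows that $\kappa_\phi\geqslant\kappa$ pointwise on $\ku Z\cap[1,\infty)$ and $\om_\phi\leqslant\om$ there, hence $(\kappa_\phi,\om_\phi)\in\ku M_{\text{coarse}}$ and $\phi$ is a coarse embedding of $R[e_{n_1},e_{n_2},\ldots]$ into $E$. (For the uniform and bi-Lipschitz cases the domain $R[e_{n_1},e_{n_2},\ldots]$ is already a genuine metric subspace of $X$, so no density/extension step is needed: one simply transfers the modulus bounds, noting that $\ku M_{\text{uniform}}$ is insensitive to behavior at large scales and $\ku M_{\text{Lipschitz}}$ is scale-invariant.)
\end{proof}

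The only genuinely new bookkeeping compared to Lemma~\ref{lem:elementary} is extracting the increasing index sequence $(n_j)$ from the chain $A_{k+1}\prec^\star A_k$ and checking that $\bigcup_k R[A_k]$ is literally $R[e_{n_1},e_{n_2},\ldots]$; I expect the main point requiring care is simply that $\prec^\star$ appends the \emph{new} basis vector with a larger index than all previous ones, which is precisely what forces the union to be of the stated subsequence form (rather than an arbitrary countable subset of the basis), and correspondingly that in the forward direction one must start from a subsequence, not an arbitrary infinite subset. Everything else—the gluing, the passage from the two-sided modulus inequality on a dense/full domain to membership in $\ku M_{\text{coarse}}$ (resp.\ $\ku M_{\text{uniform}}$, $\ku M_{\text{Lipschitz}}$)—is verbatim the argument already given.
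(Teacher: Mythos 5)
Your proof is correct and is exactly the argument the paper intends: the paper gives no separate proof of Lemma~\ref{lem:module}, stating only that it holds ``in analogy with'' Lemma~\ref{lem:elementary}, and your write-up carries out precisely that analogy --- building the descending chain from the sets $A_k=\{e_{n_1},\ldots,e_{n_k}\}$ in the forward direction, and in the converse extracting the increasing index sequence from $A_{k+1}\prec^\star A_k$, gluing the compatible restrictions on $\bigcup_k R[A_k]=R[e_{n_1},e_{n_2},\ldots]$, and transferring the modulus bounds (with no extension step needed since the glued domain is already the target space of the statement). The only blemish is the reversed inclusion ``$R[A_k]\supseteq R[A_{k+1}]$'' in the forward direction, which should read $R[A_k]\subseteq R[A_{k+1}]$ and is in any case immaterial there since all maps in your chain are literally equal to $\phi$.
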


Now, when $(e_n)_{n=1}^\infty$ is a subsymmetric Schauder basis for $X$, then, for any infinite subsequence $e_{n_1}, e_{n_2}, e_{n_3}, \ldots$, the map $e_k\mapsto e_{n_k}$ extends to a bi-Lipschitz equivalence between the two metric spaces $R[e_{1}, e_{2}, e_{3}, \ldots]$ and $R[e_{n_1}, e_{n_2}, e_{n_3}, \ldots]$. Also, as $R$ is countable, so is $R[e_{1}, e_{2}, e_{3}, \ldots]$. The following result is thus obtained exactly as Theorem \ref{thm:elementary}.

\begin{thm}
Suppose  $(e_n)_{n=1}^\infty$ is a subsymmetric Schauder basis for a Banach space $X$, $E$ is a separable complete metric space and $R$ is either $\Z$ or $\Q$.  Then either $R[e_{1}, e_{2}, e_{3}, \ldots]$ coarsely embeds into $E$ or the rank
    $$
    {\sf rk}\big({\mathcal Coa}\big({R,(e_n),E}\big), <\big)
    $$
    is countable.
\end{thm}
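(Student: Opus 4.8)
The plan is to run the argument of Theorem \ref{thm:elementary} almost verbatim, with Lemma \ref{lem:module} in place of Lemma \ref{lem:elementary}, inserting one extra step that uses the subsymmetry of $(e_n)$.

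First I would check definability. Since $R$ is countable, $R[e_1,e_2,\ldots]$ is a countable metric space, so $E^{R[e_1,e_2,\ldots]}$ is Polish; the finite subsets of $\{e_1,e_2,\ldots\}$ form a countable discrete space; and $\ku N$ is Polish. Hence $\Theta\big(R,(e_n),E\big)$ lives inside the Polish space obtained as the product of these four factors, and in fact it is closed there: for each fixed finite $A$ the defining condition ranges over the countably many pairs of distinct $x,y\in R[A]$, and for each such pair the inequality $\kappa\big(\norm{x-y}_-\big)\leqslant d_E\big(\phi(x),\phi(y)\big)\leqslant\om\big(\norm{x-y}_+\big)$ is a closed condition on $(\kappa,\om,\phi)$, because the relevant evaluation maps on $\ku N$ and on $E^{R[e_1,e_2,\ldots]}$ are continuous and $\leqslant$ is a closed relation on $[0,\infty]$. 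As $\ku M_{\text{coarse}}$ is Borel in $\ku N\times\ku N$, the set ${\mathcal Coa}\big({R,(e_n),E}\big)$ is a Borel subset of this Polish space, and the relation $<$ is closed --- hence analytic --- as a subset of $\Theta\big(R,(e_n),E\big)$.

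Next comes the dichotomy, according to whether $<$ is ill-founded on ${\mathcal Coa}\big({R,(e_n),E}\big)$. If it is, Lemma \ref{lem:module} provides an infinite subsequence $e_{n_1},e_{n_2},e_{n_3},\ldots$ of the basis such that $R[e_{n_1},e_{n_2},e_{n_3},\ldots]$ coarsely embeds into $E$; here I invoke subsymmetry, namely that $e_k\mapsto e_{n_k}$ extends to a bi-Lipschitz equivalence $R[e_1,e_2,e_3,\ldots]\to R[e_{n_1},e_{n_2},e_{n_3},\ldots]$. Since a bi-Lipschitz embedding is in particular coarse and coarse embeddings compose, $R[e_1,e_2,e_3,\ldots]$ itself coarsely embeds into $E$. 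If instead $<$ is well-founded on ${\mathcal Coa}\big({R,(e_n),E}\big)$, then it is an analytic well-founded relation on a Borel subset of a Polish space, so Theorem \ref{boundedness} forces ${\sf rk}\big({\mathcal Coa}\big({R,(e_n),E}\big),<\big)$ to be countable. This proves the theorem, and the uniform and bi-Lipschitz cases follow identically, with $\ku M_{\text{uniform}}$ or $\ku M_{\text{Lipschitz}}$ replacing $\ku M_{\text{coarse}}$ throughout.

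The only genuinely new ingredient, compared to Theorem \ref{thm:elementary}, is the subsymmetry step that upgrades coarse embeddability of a single subsequence to coarse embeddability of the full module $R[e_1,e_2,e_3,\ldots]$; I do not anticipate any real obstacle there, nor in the descriptive-set-theoretic bookkeeping, which merely re-runs the earlier argument. (For the converse half of the equivalence underlying Lemma \ref{lem:module} one simply takes the trivial subsequence $n_k=k$, so a coarse embedding of $R[e_1,e_2,e_3,\ldots]$ into $E$ does yield an infinite descending $<$-chain; but only the implication used above is needed for the present statement.)
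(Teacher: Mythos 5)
Your proposal is correct and follows essentially the same route as the paper, which obtains this theorem ``exactly as Theorem \ref{thm:elementary}'' using the countability of $R[e_1,e_2,\ldots]$, the Borelness of ${\mathcal Coa}\big(R,(e_n),E\big)$, the boundedness theorem, and the subsymmetry remark that $e_k\mapsto e_{n_k}$ induces a bi-Lipschitz equivalence of $R[e_1,e_2,\ldots]$ with $R[e_{n_1},e_{n_2},\ldots]$. Nothing is missing.
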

Again, similar results hold for uniform and bi-Lipschitz embeddability.

Observe now that, if $R=\Q$, then  $R[e_{1}, e_{2}, e_{3}, \ldots]$ is a dense subspace of the Banach space $X$ and so coarse, uniform or bi-Lipschitz embeddability of $X$ into $E$ is equivalent to the corresponding embeddability of  $R[e_{1}, e_{2}, e_{3}, \ldots]$ into $E$. We thus have the following corollary.
\begin{cor}
Suppose  $(e_n)_{n=1}^\infty$ is a subsymmetric Schauder basis for a Banach space $X$ and $E$ is a separable complete metric space.  Then either $X$ coarsely embeds into $E$ or the rank
    $$
    {\sf rk}\big({\mathcal Coa}\big({\Q,(e_n),E}\big), <\big)
    $$
    is countable.
\end{cor}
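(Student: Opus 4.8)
The plan is to read the corollary off the immediately preceding theorem in the case $R=\Q$, upgraded by the density observation recorded just before the statement. That theorem supplies the dichotomy: either $\Q[e_{1},e_{2},e_{3},\ldots]$ coarsely embeds into $E$, or the rank ${\sf rk}\big({\mathcal Coa}\big({\Q,(e_n),E}\big),<\big)$ is countable. So it suffices to check that $X$ coarsely embeds into $E$ precisely when its dense subset $\Q[e_{1},e_{2},e_{3},\ldots]$ does; the corollary then follows by substituting the statement ``$X$ coarsely embeds into $E$'' for ``$\Q[e_{1},e_{2},e_{3},\ldots]$ coarsely embeds into $E$'' in the first alternative.

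The forward direction is immediate: restricting a coarse embedding $\phi\colon X\to E$ to the subset $\Q[e_{1},e_{2},\ldots]$ only enlarges the compression modulus and shrinks the expansion modulus, so the pair $(\kappa_\phi,\om_\phi)$ stays in $\ku M_{\text{coarse}}$ and the restriction remains a coarse embedding. For the reverse direction I would repeat the perturbation step from the final paragraph of the proof of Lemma \ref{lem:elementary}: given a coarse embedding $\phi\colon \Q[e_{1},e_{2},\ldots]\to E$ whose moduli are dominated by some $(\kappa,\om)\in \ku M_{\text{coarse}}$, density lets us pick for each $x\in X$ a point $x'\in \Q[e_{1},e_{2},\ldots]$ with $\norm{x-x'}<\tfrac12$, and setting $\phi(x)=\phi(x')$ one gets $\norm{x-y}\geqslant n+1 \;\saa\; d_E\big(\phi(x),\phi(y)\big)\geqslant \kappa(n)$ and $\norm{x-y}\leqslant n-1 \;\saa\; d_E\big(\phi(x),\phi(y)\big)\leqslant \om(n)$ for all $x,y\in X$ and $n\geqslant 1$; since $\lim_{n}\kappa(n)=\infty$ and $\om(n)<\infty$ for all $n$, this extension is a coarse embedding of $X$ into $E$.

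I do not expect a genuine obstacle here, since the corollary is essentially a repackaging of the preceding theorem. The only point calling for a little care is the bounded shift of the relevant integers (from $n$ to $n\pm 1$) caused by the $\tfrac12$-perturbation, and this is harmless because it disturbs neither $\lim_{n}\kappa(n)=\infty$ nor the finiteness of $\om$. The analogous dichotomies for uniform and bi-Lipschitz embeddability, though not needed for this corollary, would be obtained in exactly the same way, replacing $\ku M_{\text{coarse}}$ by $\ku M_{\text{uniform}}$ or $\ku M_{\text{Lipschitz}}$ and noting that a fixed-size perturbation preserves membership in those classes too.
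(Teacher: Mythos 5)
Your proposal is correct and follows exactly the route the paper takes: the corollary is deduced from the preceding theorem with $R=\Q$ together with the observation that, since $\Q[e_1,e_2,\ldots]$ is dense in $X$, coarse embeddability of $X$ into $E$ is equivalent to that of $\Q[e_1,e_2,\ldots]$, which the paper records without proof and which you justify by the same $\tfrac12$-perturbation argument already used at the end of the proof of Lemma \ref{lem:elementary}. No issues.
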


In the specific case when $(e_n)_{n=1}^\infty$ is the standard unit vector basis for $X=c_0$, we note that $\Z[e_{1}, e_{2}, e_{3}, \ldots]$ is $1$-dense in $c_0$ and hence that coarse embeddability of $c_0$ into $E$ is again equivalent to that of $\Z[e_{1}, e_{2}, e_{3}, \ldots]$ into $E$.
\begin{cor}
Suppose  $(e_n)_{n=1}^\infty$ is the standard unit vector basis for $c_0$ and $E$ is a separable complete metric space.  Then either $c_0$ coarsely embeds into $E$ or the rank
    $$
    {\sf rk}\big({\mathcal Coa}\big({\Z,(e_n),E}\big), <\big)
    $$
    is countable.
\end{cor}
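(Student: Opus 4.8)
The plan is to read the corollary off the Theorem stated just above it, applied with the ring $R=\Z$, together with the observation --- already anticipated in the remark preceding the statement --- that the metric subspace $\Z[e_1,e_2,\ldots]$ is coarsely indistinguishable from $c_0$.

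First I would note that the standard unit vector basis $(e_n)_{n=1}^\infty$ of $c_0$ is symmetric, since $\norm{\sum_i a_ie_{n_i}}=\max_i|a_i|=\norm{\sum_i a_ie_i}$ for every choice of indices; in particular it is a subsymmetric Schauder basis. Hence the Theorem above, applied with $R=\Z$, gives directly that either $\Z[e_1,e_2,\ldots]$ coarsely embeds into $E$, or the rank ${\sf rk}\big({\mathcal Coa}\big({\Z,(e_n),E}\big),<\big)$ is countable. It therefore only remains to show that $\Z[e_1,e_2,\ldots]$ coarsely embeds into $E$ if and only if $c_0$ does. One direction is immediate: restricting a coarse embedding $\psi\colon c_0\to E$ to the subspace $\Z[e_1,e_2,\ldots]$ can only increase its compression modulus and decrease its expansion modulus, so the restriction is again a coarse embedding, i.e.\ its pair of moduli remains in $\ku M_{\text{coarse}}$.

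For the converse I would repeat, essentially verbatim, the final paragraph of the proof of Lemma \ref{lem:elementary}. Given a coarse embedding $\phi\colon\Z[e_1,e_2,\ldots]\to E$, extend it to $\tilde\phi\colon c_0\to E$ by choosing, for each $x\in c_0$, a point $x'\in\Z[e_1,e_2,\ldots]$ with $\norm{x-x'}\leqslant 1$ --- for instance the finitely supported integer vector obtained by rounding each coordinate of $x$ to a nearest integer --- and setting $\tilde\phi(x)=\phi(x')$. Since $\big|\norm{x-y}-\norm{x'-y'}\big|\leqslant 2$ for all $x,y\in c_0$, the moduli of $\tilde\phi$ differ from those of $\phi$ only through a bounded shift of the argument; consequently $\lim_{n\to\infty}\kappa_{\tilde\phi}(n)=\infty$ because $\lim_{n\to\infty}\kappa_\phi(n)=\infty$, and $\om_{\tilde\phi}(r)<\infty$ for all $r$ because $\om_\phi$ is finite. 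Thus $(\kappa_{\tilde\phi},\om_{\tilde\phi})\in\ku M_{\text{coarse}}$ and $\tilde\phi$ is a coarse embedding of $c_0$ into $E$.

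I do not expect any genuine obstacle. The only point requiring (minor) care is the last one: that the bounded ``blurring'' introduced by replacing each $x$ by a $1$-close integer representative does not disturb membership of the moduli in $\ku M_{\text{coarse}}$. This is immediate, since that class constrains only the asymptotics of $\kappa$ and the pointwise finiteness of $\om$, both insensitive to translating the argument by a fixed constant; in short, the corollary is simply a transfer of the Theorem along the coarse equivalence between $\Z[e_1,e_2,\ldots]$ and $c_0$.
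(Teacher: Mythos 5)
Your proposal is correct and follows essentially the same route as the paper: the paper likewise derives the corollary from the preceding theorem with $R=\Z$ together with the observation, stated just before the corollary, that $\Z[e_1,e_2,\ldots]$ is $1$-dense in $c_0$ and hence coarsely embeds into $E$ exactly when $c_0$ does. Your rounding argument just makes explicit the details the paper leaves to the reader.
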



\subsection{Schreier spaces}
To provide lower estimates for the coarse embeddability ranks, as in \cite{BLMS_FM} we will employ metric spaces built over the so-called Schreier sets, which are compact hereditary families of finite subsets of $\N=\{1,2,3,\ldots\}$. For this, if $A$ and $B$ are finite subsets of $\N$ and $n\in \N$, we write
$$
n\leqslant A
$$
to denote that either $A$ is empty or that $n\leqslant \min A$ and write
$$
A<B
$$
to mean that either one of $A$ and $B$ is empty or that $ \max A<\min B$. Finally, let
$$
A\prec^\star B
$$ 
denote that $B=\{{n_1},\ldots, {n_k}\}$ and $A=\{{n_1}, \ldots, {n_k}, {n_{k+1}}\}$ for some $n_1<\ldots<n_k<n_{k+1}$.
For every countable limit ordinal $\lambda$, we fix once and for all an increasing sequence $\lambda_1<\lambda_2<\lambda_3<\ldots$ of ordinals with limit $\lambda$.

By induction on $\alpha<\om_1$, we define a family $\ku S_\alpha\subseteq \ku P(\N)$ consisting exclusively of finite subsets of $\N$ so that $\ku S_\alpha$ is {\em hereditary}, i.e., if $A\subseteq B$ and $B\in \ku S_\alpha$, then $A\in \ku S_\alpha$, and so that $\ku S_\alpha$ is compact when viewed as a subset of the Cantor space $\{0,1\}^\N$. This is done as follows.
\maths{
\ku S_0&=\big\{\{n\}\del n\in \N\big\}\cup \{\tom\},\\
\ku S_{\alpha+1}&=\big\{ \bigcup_{j=1}^nA_j\del n\in \N \;\;\&\;\; A_1,\ldots, A_n\in \ku S_\alpha\;\;\&\;\; n\leqslant A_1<\ldots< A_n\big\},\\
\ku S_{\lambda}&=\big\{ A\del \e n\in \N \;\;\; A \in \ku S_{\lambda_n}\;\&\; n\leqslant A\big\} \text{ for $\lambda$ a limit ordinal}.
}
By transfinite induction on $\alpha<\om_1$, one can show that 
$$
{\sf rk}\big(\ku S_\alpha, \prec^\star\big)=\om^\alpha+1.
$$

If now $(e_n)_{n=1}^\infty$ is a Schauder basis for a Banach space $X$, we let 
$$
\Z[\ku S_\alpha]=\Big\{\sum_{i\in A}t_ie_i\Del A\in \ku S_\alpha\;\;\&\;\; t_i\in \Z\Big\}.
$$
Note that, contrary to the subspaces $\Z[A]$ for $A\subseteq \{e_1,e_2,\ldots\}$, the subset $\Z[\ku S_\alpha]$ will not be a submodule of $\Z[e_1,e_2,\ldots]$ as it is not closed under addition. We write $\Z[\ku S_\alpha]_X$ for the metric space obtained by endowing $\Z[\ku S_\alpha]$ with the distance induced by the norm of $X$. 

\begin{thm}
Suppose  $(e_n)_{n=1}^\infty$ is the standard unit vector basis for $c_0$ and $E$ is a separable complete metric space.  Then $c_0$ coarsely embeds into $E$ if and only if $\Z[\ku S_\alpha]_{c_0}$ coarsely embeds into $E$ for all $\alpha<\om_1$.
\end{thm}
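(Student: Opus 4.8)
The plan is to prove the two directions separately, with the forward direction being essentially trivial and the reverse direction being where all the work lies.

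For the \emph{only if} direction: if $c_0$ coarsely embeds into $E$ via some $\phi$, then since each $\Z[\ku S_\alpha]_{c_0}$ is a metric subspace of $c_0$, the restriction $\phi\upharpoonright_{\Z[\ku S_\alpha]}$ is a coarse embedding of $\Z[\ku S_\alpha]_{c_0}$ into $E$ (a coarse embedding restricted to a subspace is a coarse embedding, since the compression modulus only increases and the expansion modulus only decreases upon restriction).

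For the \emph{if} direction, I would argue by contraposition combined with the boundedness machinery. Suppose $c_0$ does \emph{not} coarsely embed into $E$. Then, by the Corollary above applied to the standard unit vector basis of $c_0$ (which is subsymmetric, in fact $1$-symmetric), the rank $\gamma := {\sf rk}\big({\mathcal Coa}\big(\Z,(e_n),E\big), <\big)$ is a countable ordinal. Now fix any $\alpha<\om_1$; I want to find a single countable ordinal $\beta$ (depending on $\gamma$, hence ultimately only on $E$) past which no $\Z[\ku S_\alpha]_{c_0}$ can coarsely embed, and more precisely I will show that a coarse embedding of $\Z[\ku S_\alpha]_{c_0}$ into $E$ would force $\gamma \geqslant {\sf rk}(\ku S_\alpha,\prec^\star) = \om^\alpha+1$. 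Since the $\ku S_\alpha$ exhaust all countable ordinals as $\om^\alpha+1$ grows unboundedly below $\om_1$, there is some $\alpha$ with $\om^\alpha + 1 > \gamma$, and for that $\alpha$ the space $\Z[\ku S_\alpha]_{c_0}$ fails to coarsely embed into $E$ — contradicting the hypothesis.

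The heart of the matter is the rank comparison: a coarse embedding $\psi\colon \Z[\ku S_\alpha]_{c_0}\to E$, say with $(\kappa_\psi,\om_\psi)\in\ku M_{\text{coarse}}$, should induce an order-embedding of $(\ku S_\alpha,\prec^\star)$ into $({\mathcal Coa}(\Z,(e_n),E),<)$, thereby forcing ${\sf rk}(\ku S_\alpha,\prec^\star) \leqslant {\sf rk}({\mathcal Coa}(\Z,(e_n),E),<)$. Concretely, to each nonempty $A=\{n_1<\dots<n_k\}\in\ku S_\alpha$ I associate the quadruple $(\kappa_\psi,\om_\psi,\{e_{n_1},\dots,e_{n_k}\},\widetilde\psi)$, where $\widetilde\psi$ is any fixed extension of $\psi\upharpoonright_{\Z[\ku S_\alpha]}$ to all of $\Z[e_1,e_2,\dots]$; the inequality $\kappa_\psi(\norm{x-y}_-)\leqslant d_E(\widetilde\psi(x),\widetilde\psi(y))\leqslant\om_\psi(\norm{x-y}_+)$ holds for distinct $x,y\in\Z[\{e_{n_i}\}]$ precisely because $\Z[\{e_{n_i}:i\leqslant k\}]\subseteq\Z[\ku S_\alpha]$ — here one uses that $A\in\ku S_\alpha$ together with hereditarity guarantees every subset of $A$ lies in $\ku S_\alpha$, so indeed every integer combination supported on $A$ is in $\Z[\ku S_\alpha]$. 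Moreover, the extension $\widetilde\psi$ can and should be chosen \emph{coherently}: fixing one extension $\widetilde\psi$ once and for all makes the map $A\mapsto(\kappa_\psi,\om_\psi,A,\widetilde\psi)$ respect the successor relation, since $A\prec^\star B$ forces the $B$-quadruple to have the larger support and the same $\widetilde\psi$, which is exactly the definition of $<$. The one subtlety to check carefully is that $\prec^\star$ on $\ku S_\alpha$ and $\prec^\star$ on finite subsets of $\{e_1,e_2,\dots\}$ are matched correctly under $n_i\leftrightarrow e_{n_i}$, and that the rank is monotone under such order-embeddings into a well-founded relation — this is standard, but it is the step I expect to require the most care to state cleanly. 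Having established ${\sf rk}(\ku S_\alpha,\prec^\star)\leqslant\gamma$, i.e. $\om^\alpha+1\leqslant\gamma$, we reach the promised contradiction by choosing $\alpha$ large enough, completing the proof.
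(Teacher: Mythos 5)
Your proposal is correct and follows essentially the same route as the paper: the only-if direction by restriction, and the other direction via the monotone map $A\mapsto(\kappa_\psi,\om_\psi,\{e_i\}_{i\in A},\widetilde\psi)$ from $(\ku S_\alpha,\prec^\star)$ into $({\mathcal Coa}(\Z,(e_n),E),<)$, which forces ${\sf rk}\big({\mathcal Coa}(\Z,(e_n),E),<\big)\geqslant\om^\alpha+1$ and hence, if all $\alpha<\om_1$ occur, uncountability of the rank and an appeal to the preceding corollary. The only cosmetic difference is that you argue by contraposition while the paper concludes directly, and you spell out the hereditarity point (that $\Z[\{e_i\}_{i\in A}]\subseteq\Z[\ku S_\alpha]$) which the paper leaves implicit.
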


\begin{proof}
Suppose $\alpha<\om_1$ and that $\Z[\ku S_\alpha]_{c_0}\maps\phi E$ is a coarse embedding and let $\kappa_\phi$ and $\om_\phi$ be the compression and expansion moduli of $\phi$. Extend also $\phi$ arbitrarily to a map 
$\Z[e_1,e_2,\ldots]\maps \phi E$. This means that, for all $A\in \ku S_\alpha$, we have 
$$
\kappa_\phi\big( \norm{x-y}_-\big)\leqslant d_E\big(\phi(x),\phi(y)\big)\leqslant\om_\phi \big( \norm{x-y}_+\big)
$$
whenever $x,y\in \Z[\{e_i\}_{i\in A}]$ are distinct and so 
$$
\big(\kappa_\phi,\om_\phi, \{e_i\}_{i\in A}, \phi\big)\in {\mathcal Coa}\big({\Z,(e_n),E}\big).
$$
Furthermore, we note that the map $\ku S_\alpha\maps f {\mathcal Coa}\big({\Z,(e_n),E}\big)$ given by
$$
f(A)=\big(\kappa_\phi,\om_\phi, \{e_i\}_{i\in A}, \phi\big)
$$
satisfies 
$$
A\prec^\star B\saa f(A)<f(B),
$$
which implies that
$$
\om^\alpha+1={\sf rk}\big(\ku S_\alpha, \prec^\star\big)\leqslant {\sf rk}\big({\mathcal Coa}\big({\Z,(e_n),E}\big), <\big).
$$
Therefore, if $\Z[\ku S_\alpha]_{c_0}$ coarsely embeds into $E$ for all $\alpha<\om_1$, we see that the rank ${\sf rk}\big({\mathcal Coa}\big({\Z,(e_n),E}\big), <\big)$ is uncountable and therefore that $c_0$ coarsely embeds into $E$.  The reverse implication is immediate.
\end{proof}


\section{A remark on extracting equi-coarse embeddings}

If $A$ and $B$ are two subsets of $\N$, we say that $A$ is \emph{almost contained} in $B$, written $A\subseteq^* B$, if $|A\setminus B|<\aleph_0$. A infinite set $D\subseteq \N$ is called a \emph{pseudo-intersection} of a family $\mathcal{C}$ of subsets of $\N$ provided that $D\subseteq^* C$ for all $C \in \mathcal{C}$. In \cite{BLMS_FM}, Proposition \ref{prop:equi-coarse} was established via the following diagonalization principle.

\begin{lemme}[MA+$\neg$CH]\label{lem:tower}
Let $\{A_\alpha\}_{\alpha<\omega_1}$ be a family of infinite subsets of $\N$ such that $A_\beta\subseteq^* A_\alpha$ whenever $\alpha<\beta$. Then $\{A_\alpha\}_{\alpha<\omega_1}$ has a pseudo-intersection.
\end{lemme}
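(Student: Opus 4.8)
The plan is to apply Martin's Axiom to the standard $\sigma$-centered poset that generically adds a pseudo-intersection to a filter with the strong finite intersection property, applied here to the filter generated by the tower. First I would record that $\{A_\alpha\}_{\alpha<\omega_1}$ has the \emph{strong finite intersection property}: given $\alpha_1<\cdots<\alpha_k$, the hypothesis yields $A_{\alpha_k}\subseteq^* A_{\alpha_i}$ for every $i\leqslant k$, so $A_{\alpha_k}\setminus\bigcap_{i}A_{\alpha_i}$ is finite and hence $\bigcap_{i}A_{\alpha_i}$ is infinite.

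Next I would introduce the forcing $\mathbb P$ whose conditions are pairs $(s,F)$ with $s\in[\N]^{<\om}$ and $F\in[\omega_1]^{<\om}$, ordered by setting $(s,F)\leqslant(t,G)$ when $s\supseteq t$, $F\supseteq G$ and $s\setminus t\subseteq\bigcap_{\alpha\in G}A_\alpha$ (the empty intersection being all of $\N$). Conditions sharing the same first coordinate $s$ have the common extension $(s,F_1\cup F_2)$, so $\mathbb P=\bigcup\{\mathbb P_s\del s\in[\N]^{<\om}\}$ is a countable union of centered pieces; in particular $\mathbb P$ is ccc, so Martin's Axiom applies to it.

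Then I would name the dense sets to be met. For $n\in\N$ put $D_n=\{(s,F)\del|s|\geqslant n\}$: given $(s,F)$, the strong finite intersection property makes $\bigcap_{\alpha\in F}A_\alpha$ infinite, so adjoining to $s$ sufficiently many elements of it produces an extension in $D_n$, proving density. For $\alpha<\omega_1$ put $E_\alpha=\{(s,F)\del\alpha\in F\}$, which is dense since $(s,F\cup\{\alpha\})\leqslant(s,F)$. This is a family of $\aleph_0+\aleph_1=\aleph_1$ dense sets, and since $\neg$CH gives $\aleph_1<\mathfrak c$, Martin's Axiom provides a filter $G\subseteq\mathbb P$ meeting every $D_n$ and every $E_\alpha$. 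Setting $D=\bigcup\{s\del(s,F)\in G\text{ for some }F\}$, the $D_n$ force $D$ to be infinite; fixing $\alpha<\omega_1$ and $(s_0,F_0)\in G\cap E_\alpha$, any $(s,F)\in G$ has a common extension $(t,H)\in G$ with $(s_0,F_0)$, whence $s\setminus s_0\subseteq t\setminus s_0\subseteq\bigcap_{\beta\in F_0}A_\beta\subseteq A_\alpha$; thus $D\setminus s_0\subseteq A_\alpha$, so $D\subseteq^* A_\alpha$ and $D$ is the desired pseudo-intersection.

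I expect the only genuinely delicate points to be the verification that $\mathbb P$ is $\sigma$-centered — which amounts to checking that conditions with equal first coordinate are pairwise compatible — and the accounting that precisely $\aleph_1$ dense sets suffice, so that $\neg$CH is exactly what is needed in order to invoke Martin's Axiom; the remaining verifications are routine bookkeeping.
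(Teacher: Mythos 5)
Your argument is correct: the strong finite intersection property of the tower, the $\sigma$-centeredness of the Mathias--Prikry-type poset of pairs $(s,F)$, the density of the sets $D_n$ and $E_\alpha$, and the extraction of the pseudo-intersection from a generic filter all check out, including the transitivity of your ordering and the compatibility of conditions sharing a first coordinate. It is, however, a genuinely different route from the paper's: the paper gives no direct argument for this lemma at all, but instead observes that $\text{MA}_{\sigma\text{-centered}}$ implies $\go t=2^{\aleph_0}$ (citing Just--Weese, Theorems 19.24 and 19.25), so that under $\neg$CH no decreasing $\subseteq^*$-chain of length $\omega_1$ can be a tower, i.e., every such chain has a pseudo-intersection. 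What the citation buys the paper is brevity and the explicit connection to the cardinal invariant $\go t$, which it then uses to motivate replacing $\go t$ by the bounding number $\go b$ in the rest of the section. What your proof buys is self-containedness and a slightly stronger conclusion: since you never use that the family is $\subseteq^*$-decreasing except to establish the strong finite intersection property, your argument shows that \emph{any} family of $\aleph_1$ many infinite subsets of $\N$ with the strong finite intersection property has a pseudo-intersection (that is, $\go p>\aleph_1$), and it only invokes Martin's Axiom for $\sigma$-centered posets, matching the weakening the paper alludes to. The two delicate points you flag are indeed the right ones, and both are handled correctly.
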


Given an ordinal $\lambda$, a \emph{$\lambda$-tower} is a family $\{A_\alpha\}_{\alpha<\lambda}$ of infinite subsets of $\N$ with no pseudo-intersection and such that $A_\beta\subseteq^* A_\alpha$ whenever $\alpha<\beta<\lambda$. We then define the cardinal number $\go t$ by
$$
\mathfrak{t}=\min\{|\lambda|\colon \text{there exists a } \lambda\text{-tower}\}.
$$
Whereas it is easy to see that  $\aleph_1\leqslant\mathfrak{t}\leqslant 2^{\aleph_0}$, the specific value of  $\go t$ is highly sensitive to additional set theoretical assumptions. For example, under the assumption of  $\text{MA}_{\sigma\text{-centered}}$ (a slight weakening of MA),  we have 
$$
\mathfrak{t}=2^{\aleph_0}
$$ 
(see \cite[Theorem 19.24, Theorem 19.25]{JustWeese}), which immediately implies Lemma \ref{lem:tower}. Therefore, the argument from \cite{BLMS_FM} could be straightforwardly extended to other uncountable cardinals strictly less than $2^{\aleph_0}$. Here we present a different way to derive Proposition \ref{prop:equi-coarse} in terms of another cardinal number, the bounding number $\mathfrak{b}$, that is more naturally related to the problem at hand.

For functions $f,g\colon \N\to \N$,  set
$$
f<^*g\;\equi \; \text{there is some $n$ so that }f(m)<g(m) \text{ for all } m\geqslant n.
$$
In this case, we say that $g$ {\em eventually dominates} $f$.
The relation $<^*$ defines a partial order on $\N^\N$ and we say that a family $F\subseteq \N^\N$ is \emph{bounded} (with respect to $<^*$) if there exists $g\in \N^\N$ such that $f<^* g$ for all $f\in F$. If a family is not bounded it is called an \emph{unbounded} family. The \emph{bounding number} is the cardinal number $\mathfrak{b}$ defined by
$$
\mathfrak{b}=\min\{ |F|\colon F\subseteq \N^\N\; \&\;  F \text{ is unbounded}\}. 
$$

The following inequalities hold without any additional assumptions (see \cite[Theorem 19.24]{JustWeese}),
$$
\aleph_1\leqslant \go t\leqslant \go b\leqslant 2^{\aleph_0}.
$$
In particular, $\text{MA}_{\sigma\text{-centered}}$ and $\neg\,\text{CH}$ imply that $\mathfrak{b} = 2^{\aleph_0}>\aleph_1$.

The next lemma relates  the bounding number to the boundedness problem for families of non-negative maps and will subsequently be applied to the families of compression and expansion moduli of a family of coarse embeddings.

\begin{lemme}[$\go b>\aleph_1$]\label{prop:expansion}
Suppose $\{g_i\}_{i\in I}$ and $\{f_i\}_{i\in I}$ are uncountable families of non-decreasing functions $g_i,f_i\in\N^\N$ satisfying $\lim_{n\to \infty} g_i(n) = \lim_{n \to \infty} f_i(n)=\infty$.  Then there are non-decreasing functions $f,g\in \N^\N$ also satisfying  $\lim_{n\to \infty} g(n) = \lim_{n\to\infty} f(n)=\infty$ and an uncountable subset $J\subseteq I$ so that
$$
g\leqslant g_i, \quad f_i\leqslant f \quad\text{ for all $i\in J$}.
$$
\end{lemme}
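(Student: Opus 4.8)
The plan is to split the statement into its two halves: first find an uncountable $J$ and a single non-decreasing $g\in\N^\N$ with $g\leqslant g_i$ for all $i\in J$ and $\lim g(n)=\infty$; then, on this same $J$ (or a further uncountable subset of it), find a single non-decreasing $f\in\N^\N$ with $f_i\leqslant f$ for all $i\in J$ and $\lim f(n)=\infty$. The two problems are formally dual, so I will concentrate on the second (the ``expansion'' side, dominating from above) and indicate the modifications for the first.

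For the expansion side, the obvious attempt ``let $f$ eventually dominate all $f_i$'' fails because $\{f_i\}$ has size $\aleph_1$, and $\go b>\aleph_1$ only guarantees boundedness of families of size $<\go b$, which includes $\aleph_1$ — so in fact this direct approach \emph{works}: since $|I|$ may be uncountable but we only need \emph{some} uncountable $J$, first pass to an uncountable $J_0\subseteq I$ of size exactly $\aleph_1$. As $\aleph_1<\go b$, the family $\{f_i\}_{i\in J_0}$ is bounded with respect to $<^*$, so there is $h\in\N^\N$ with $f_i<^* h$ for every $i\in J_0$. Now $h$ need not be non-decreasing nor tend to infinity, and the domination is only eventual, so I would replace $h$ by $f(n)=\max\big(n,\max_{k\leqslant n}h(k)\big)$; this $f$ is non-decreasing and $f(n)\geqslant n\to\infty$. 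To upgrade eventual domination $f_i<^* h$ to everywhere domination $f_i\leqslant f$ on an uncountable set, note that for each $i\in J_0$ there is a threshold $N_i$ with $f_i(m)\leqslant h(m)\leqslant f(m)$ for all $m\geqslant N_i$; by pigeonhole there is a fixed $N$ and an uncountable $J_1\subseteq J_0$ with $N_i=N$ for all $i\in J_1$. The finitely many values $f_i(1),\ldots,f_i(N-1)$ for $i\in J_1$ still range over $\N$, so one more pigeonhole step does not obviously bound them — instead I would simply enlarge $f$ once more: since we now only care about the single fixed index $N$, I will argue differently. Fix any $i_0\in J_1$; for each $m<N$ the set $\{f_i(m):i\in J_1\}$ is a subset of $\N$, and I cannot control it by pigeonhole on an uncountable set. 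The clean fix: for each $m<N$, choose an uncountable $J_2\subseteq J_1$ on which $f_i(m)$ is \emph{constant}, doing this successively for $m=1,\ldots,N-1$ (finitely many nested pigeonhole steps, each preserving uncountability); call the final set $J$ and let $c_m$ be the common value of $f_i(m)$ on $J$ for $m<N$. Now redefine $f(n):=\max\big(f(n),\max_{m<N}c_m\big)$ (a harmless finite modification that keeps $f$ non-decreasing and $f(n)\to\infty$). Then $f_i(m)\leqslant f(m)$ for $m<N$ (by the constant value) and for $m\geqslant N$ (by the threshold), so $f_i\leqslant f$ for all $i\in J$.

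For the compression side, run the dual argument on $\{g_i\}_{i\in J}$ (restricting attention to the uncountable $J$ already produced): consider $g_i'(n):=n-g_i(n)$ truncated below at $0$, or more simply apply the bounding number to the family of ``inverse'' functions $\widehat{g_i}(n):=\min\{m:g_i(m)\geqslant n\}$, which are well-defined and non-decreasing since $g_i\to\infty$. Since $\aleph_1<\go b$, the family $\{\widehat{g_i}\}_{i\in J}$ is $<^*$-bounded by some $\widehat g$; replacing $\widehat g$ by $\max(n,\max_{k\leqslant n}\widehat g(k))$ and applying the same finite pigeonhole-on-an-uncountable-subset trick to remove the ``eventually'' and absorb the finitely many initial discrepancies, we obtain a non-decreasing $\widehat g\in\N^\N$ with $\widehat{g_i}\leqslant\widehat g$ everywhere on a further uncountable subset. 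Define $g(n):=\min\{m:\widehat g(m)\geqslant n\}$, which is non-decreasing and tends to infinity because $\widehat g$ does; a routine Galois-connection check gives $g\leqslant g_i$ pointwise from $\widehat{g_i}\leqslant\widehat g$. Intersecting the two uncountable subsets produced on the two sides yields the final $J$, and $(f,g)$ are as required.

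The main obstacle, and the one subtle point worth stating carefully, is the passage from the \emph{eventual} domination supplied by $\go b>\aleph_1$ to the \emph{everywhere} domination demanded in the conclusion, together with the fact that $\N^\N$-valued pigeonhole on an uncountable set does not immediately bound a countable tuple of $\N$-valued coordinates; this is handled not by bounding those coordinates but by \emph{stabilizing} them to constants on a nested sequence of uncountable subsets (finitely many steps, since the threshold $N$ is fixed after the first pigeonhole) and then absorbing those finitely many constants into a finite modification of $f$ and $g$. Everything else — the monotonization $h\mapsto\max(n,\max_{k\leqslant n}h(k))$, the Galois correspondence between $g$ and its inverse $\widehat g$, and the preservation of $\lim=\infty$ under these operations — is routine.
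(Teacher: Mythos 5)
Your strategy is essentially the one in the paper: convert the problem of bounding the $g_i$ from below into the problem of bounding their generalized inverses from above, invoke $\go b>\aleph_1$ to obtain a $<^*$-bound on a family of size $\aleph_1$, stabilize the thresholds by pigeonhole, and repair the finitely many initial coordinates by a finite modification of $f$ and $g$. (The paper does this in one pass, bounding $\{f_i\}\cup\{g_i^\vee\}$ by a single $h$ and taking $f=h^\uparrow$; your two separate applications of $\go b$ with nested uncountable refinements amount to the same thing.) Two points, however. First, the step you describe as ``a routine Galois-connection check gives $g\leqslant g_i$ pointwise from $\widehat{g_i}\leqslant\widehat g$'' is false as stated. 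With $\widehat{g_i}(n)=\min\{m\colon g_i(m)\geqslant n\}$ and $g(n)=\min\{m\colon \widehat g(m)\geqslant n\}$, the double inverse satisfies $\widehat{\widehat{g_i}}(n)=g_i(n-1)+1$ for $n\geqslant 2$, so from $\widehat{g_i}\leqslant\widehat g$ you only obtain $g(n)\leqslant g_i(n-1)+1$, which can exceed $g_i(n)$: take $g_i\equiv 1$ on $\{1,\dots,100\}$ and $g_i(n)=n$ afterwards; then $\widehat{g_i}(1)=1$ and $\widehat{g_i}(2)=101$, and if $\widehat g$ agrees with these values one gets $g(50)=2>1=g_i(50)$. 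The repair is exactly the $-1$ that the paper builds into its definition $g(n)=\max\{1,h^\vee(n)-1\}$; inserting the same shift into your $g$ (and checking it still tends to infinity, which it does) makes the argument go through. Second, your repeated pigeonhole to stabilize the values $f_i(1),\dots,f_i(N-1)$ to constants is correct but unnecessary: since each $f_i$ is non-decreasing, $f_i(m)\leqslant f_i(N)<f(N)$ for all $m<N$, so replacing $f$ by $\max\{f(\,\cdot\,),f(N)\}$ already gives everywhere-domination on the uncountable set where the threshold equals $N$ (this is what the paper does), and the dual remark, again using monotonicity of the $g_i$, disposes of the initial segment on the compression side.
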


\begin{proof}
Without loss of generality, we have $|I|=\aleph_1$.
In order to bound the $g_i$ from below we will bound the collection of generalized inverses $g^{\vee}_i$ from above.
Because $\lim_{n\to\infty} g_i(n)=\infty$, the generalized inverse $g^{\vee}_i\in \N^\N$ may be defined by
\begin{equation*}
g^{\vee}_i(k)=\min \{n \del g_i(n)>k\}
\end{equation*}
and hence satisfies
\begin{equation}\label{eq:g}
   g_i\big(g^\vee_i(k)\big)>k. 
\end{equation}
Also, because $|I|<\go b$, the families $F=\{f_i\}_{i\in I}$ and $G^\vee = \{g^{\vee}_i\}_{i\in I}$ are bounded and we may therefore choose some $h\in \N^\N$ so that 
$$
g^{\vee}_i,  f_i<^* h \quad\text{for all $i\in I$}.
$$
Replacing, if necessary $h$ by the function $h^\uparrow$ defined by $h^\uparrow(n)=\max_{m\leqslant n} h(m)$, we may also suppose that $h$ is non-decreasing. Since clearly $\lim_{n\to\infty} h(n)=\infty$, we can take $f=h$ as the upper bound for the collection $F$. 

Consider the generalized inverse $h^{\vee}$ defined by
$$
h^{\vee}(k)=\min \{n \del h(n)>k\},
$$
and note that, for all sufficiently large $k$, we have
\begin{equation}\label{eq:h}
h\big(h^\vee(k)-1\big) \leqslant k. 
\end{equation}
Also,  $h^{\vee}$ is non-decreasing and $\lim_{n \to \infty} h^{\vee}(n)=\infty$. Define $g\in \N^\N$ by
$$
g(n)=\max \{1, h^\vee(n)-1\}.
$$
Then, given $i\in I$, pick some $p$ so that both
$$
g^\vee_i(k)< h(k)
$$
and (2) hold for all $k\geqslant p$. Then, for all $n\geqslant p$ large enough so that $g(n)>p$, we have by definition of $h^\vee$ that
$$
g^\vee_i( g(n) )< h\big( g(n)\big) = h\big(h^\vee(n)-1\big) \stackrel{\eqref{eq:h}}{\leqslant} n.
$$
and so, because $g_i$ is non-decreasing,
$$
g(n) \stackrel{\eqref{eq:g}}{<} g_i\big(g^\vee_i( g(n) \big) \leqslant  g_i(n).
$$

Now, for all $i\in I$, let $n_i$ be minimal so that
$$
g(n)< g_i(n), \quad f_i(n)< f(n)
$$
for all $n\geqslant n_i$. Since $I$ is uncountable, there is some uncountable set $J\subseteq I$ and some $m$ so that $n_i=m$ for all $i\in J$. By replacing $f$ and $g$ by the functions
$$
f'(n)=\max\{f(n), f(m)\}
$$
and 
$$
g'(n)=\begin{cases}
		1 &\text{ if }n<m,\\
		g(n)&\text{ if } n\geqslant m,	
	\end{cases}
$$
we can then ensure that $g\leqslant g_i$ and  $f_i\leqslant f$ for all $i\in J$.
\end{proof}

\begin{prop}[$\go b>\aleph_1$]
Suppose $Y$ is a metric space and  $\{X_i\}_{i\in I}$ is an uncountable family of metric spaces so that each $X_i$ coarsely embeds into $Y$. Then there is an uncountable subfamily $\{X_i\}_{i\in J}$ that equi-coarsely embeds into $Y$, i.e., so that all the $X_i$ with $i\in J$ embed into $Y$ with the same moduli $\kappa$ and $\om$.     
\end{prop}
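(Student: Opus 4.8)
The plan is to derive the proposition from Lemma \ref{prop:expansion} by discretising the compression and expansion moduli of the given coarse embeddings into genuine functions in $\N^\N$, applying the lemma to those, and reassembling the uniform bounds it returns into a single pair $(\kappa,\om)\in\ku M_{\text{coarse}}$.

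For each $i\in I$, fix a coarse embedding $\phi_i\colon X_i\to Y$ with compression and expansion moduli $\kappa_i,\om_i$, so $(\kappa_i,\om_i)\in\ku M_{\text{coarse}}$. I would then pass to the functions $g_i,f_i\in\N^\N$ defined by
\[
g_i(n)=\max\big\{1,\lfloor\min\{n,\kappa_i(n)\}\rfloor\big\},\qquad f_i(n)=\max\big\{1,n,\lceil\om_i(n)\rceil\big\},
\]
which are non-decreasing and finite-valued (the truncation by $n$ disposes of the value $+\infty$ that $\kappa_i$ may take when $X_i$ is bounded, and $\om_i(n)<\infty$ for every $n$ since $(\kappa_i,\om_i)\in\ku M_{\text{coarse}}$) and satisfy $\lim_n g_i(n)=\infty$ (because $\kappa_i(n)\to\infty$) and $\lim_n f_i(n)=\infty$ (because $f_i(n)\geqslant n$). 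Applying Lemma \ref{prop:expansion} to $\{g_i\}_{i\in I}$ and $\{f_i\}_{i\in I}$ then supplies non-decreasing $f,g\in\N^\N$ tending to $\infty$ together with an uncountable $J\subseteq I$ such that $g\leqslant g_i$ and $f_i\leqslant f$ pointwise for every $i\in J$.

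Next I would control the small-scale behaviour. Since $\kappa_i(n)\to\infty$, every $i\in J$ has a least $N_i\in\N$ with $\kappa_i(n)\geqslant1$ for all $n\geqslant N_i$, and by the pigeonhole principle I can pass to an uncountable $J'\subseteq J$ on which $N_i$ is a fixed value $N$. I then define $\kappa,\om\in\ku N$ by $\kappa(r)=0$ for $r<N$ and $\kappa(r)=g(r)$ for integers $r\geqslant N$, and $\om(r)=f(1)$ for $r<1$ and $\om(r)=f(r)$ for integers $r\geqslant1$; these are non-decreasing, $\om$ is finite throughout $\ku Z$, and $\kappa(n)\to\infty$, so $(\kappa,\om)\in\ku M_{\text{coarse}}$. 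The remaining point is to check, for each $i\in J'$ and all distinct $x,y\in X_i$, that $\kappa(d_{X_i}(x,y)_-)\leqslant d_Y(\phi_i(x),\phi_i(y))\leqslant\om(d_{X_i}(x,y)_+)$, which says exactly that $\{X_i\}_{i\in J'}$ is equi-coarsely embedded into $Y$ with moduli $(\kappa,\om)$. The upper estimate follows by combining $d_Y(\phi_i(x),\phi_i(y))\leqslant\om_i(d_{X_i}(x,y)_+)$ with $\om_i\leqslant f_i\leqslant f$, comparing values at $\max\{1,d_{X_i}(x,y)_+\}$; the lower estimate is immediate when $d_{X_i}(x,y)_-<N$ (there $\kappa=0$) and otherwise reads $\kappa(n)=g(n)\leqslant g_i(n)\leqslant\kappa_i(n)\leqslant d_Y(\phi_i(x),\phi_i(y))$, using that $n\geqslant N=N_i$ forces $\kappa_i(n)\geqslant1$ and hence $g_i(n)=\lfloor\min\{n,\kappa_i(n)\}\rfloor\leqslant\kappa_i(n)$.

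The substantive content is entirely in Lemma \ref{prop:expansion}; everything else is routine bookkeeping. The step I expect to require the most care is the translation between the $[0,\infty]$-valued functional moduli and the $\N^\N$ framework of that lemma — in particular, the compression moduli $\kappa_i$ need not be bounded away from $0$ near the origin and may ``take off'' at wildly different scales, which is exactly why one needs the pigeonhole step producing a common threshold $N$ below which $\kappa$ is simply declared to vanish, rather than a direct application.
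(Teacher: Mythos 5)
Your proof is correct and follows essentially the same route as the paper's: both reduce the statement to Lemma \ref{prop:expansion} by discretising the compression and expansion moduli into non-decreasing elements of $\N^\N$ and then converting the resulting uniform bounds back into a single pair $(\kappa,\om)\in \ku M_{\text{coarse}}$. The only differences are bookkeeping: the paper uses $g_i(n)=\lfloor\kappa_i(n)+1\rfloor$ and subtracts $1$ at the end instead of your pigeonhole on a common threshold $N$, while your explicit truncations (handling the value $+\infty$ and forcing $f_i(n)\geqslant n$ so that $f_i\to\infty$) are if anything slightly more careful than the paper's.
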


\begin{proof}
For every $i\in I$, there are maps $f_i \colon X_i \to Y$ and non-decreasing functions $\kappa_i, \omega_i\colon [0,\infty)\to [0,\infty)$ so that $\lim_{t\to\infty}\kappa_i(t)=\infty$ and, for all $x_1,x_2\in X_i$,
$$
\kappa_i(d_{X_i}(x_1,x_2)) \leqslant d_Y(f(x_1),f_i(x_2))\leqslant \omega_i(d_{X_i}(x_1,x_2)).
$$
We apply Lemma \ref{prop:expansion} to the maps $g_i(n)= \lfloor \kappa_i(n) + 1 \rfloor $ and $f_i(n)= \lfloor \omega_i(n) + 1  \rfloor $, which gives us non-decreasing functions $g,f\in \N^\N$ so that $g\leqslant g_i$ and $f_i\leqslant f$ for an uncountable number of $i$. Setting 
$$
\kappa(t)=g(\lfloor t\rfloor)-1, \qquad \om(t)=f(\lceil t\rceil)
$$
for all $t\geqslant 1$ and $\kappa(t)=0$, $\om(t)=f(1)$ for $t\in [0,1)$,
we see that $\kappa\leqslant \kappa_i$ and $\om_i\leqslant \om$ for an uncountable set of $i$'s.
\end{proof}


\bibliographystyle{alpha}

\begin{bibdiv}
\begin{biblist}

\bib{Aharoni1974}{article}{
  author={Aharoni, I.},
  title={Every separable metric space is Lipschitz equivalent to a subset of $c\sp {+}\sb {0}$},
  journal={Israel J. Math.},
  volume={19},
  date={1974},
  pages={284--291},
}

\bib{ArgyrosDodos}{article}{
  author={Argyros, S. A.},
  author={Dodos, P.},
  title={Genericity  and  amalgamation  of  classes  of  Banach spaces},
  journal={Adv. Math.},
  volume={209},
  year={2007},
  pages={666--748},
}

\bib{BLMS_FM}{article}{
      author={Baudier, Florent},
      author={Lancien, Gilles},
      author={Motakis, Pavlos},
      author={Schlumprecht, Thomas},
     TITLE = {Coarse and {L}ipschitz universality},
   JOURNAL = {Fund. Math.},
  FJOURNAL = {Fundamenta Mathematicae},
    VOLUME = {254},
      YEAR = {2021},
    NUMBER = {2},
     PAGES = {181--214},
      ISSN = {0016-2736},
   MRCLASS = {46B06 (05C63 46B20 46B85 46T99)},
  MRNUMBER = {4241502},
MRREVIEWER = {Bruno de Mendon\c{a} Braga},
       DOI = {10.4064/fm956-9-2020},
       URL = {https://doi.org/10.4064/fm956-9-2020},
}

\bib{Bossard2002}{article}{
  author={Bossard, B.},
  title={A coding of separable {B}anach spaces. {A}nalytic and coanalytic families of {B}anach spaces},
  journal={Fund. Math.},
  fjournal={Fundamenta Mathematicae},
  volume={172},
  year={2002},
  number={2},
}

\bib{Bourgain1980}{article}{
  author={Bourgain, J.},
  title={On separable Banach spaces, universal for all separable reflexive spaces},
  journal={Proc. Amer. Math. Soc.},
  volume={79},
  date={1980},
  number={2},
  pages={241--246},
}

\bib{Dodos2009}{article}{
  author={Dodos, P.},
  title={On classes of Banach spaces admitting ``small'' universal spaces},
  journal={Trans. Amer. Math. Soc.},
  volume={361},
  year={2009},
  number={12},
  pages={6407--6428},
}

\bib{JustWeese}{book}{
    AUTHOR = {Just, Winfried},
    AUTHOR = {Weese, Martin},
     TITLE = {Discovering modern set theory. {II}},
    SERIES = {Graduate Studies in Mathematics},
    VOLUME = {18},
      NOTE = {Set-theoretic tools for every mathematician},
 PUBLISHER = {American Mathematical Society, Providence, RI},
      YEAR = {1997},
     PAGES = {xiv+224},
      ISBN = {0-8218-0528-2},
   MRCLASS = {03-01 (03E05 04-01 04A20)},
  MRNUMBER = {1474727},
MRREVIEWER = {J. M. Henle},
       DOI = {10.1090/gsm/018},
       URL = {https://doi.org/10.1090/gsm/018},
}

\bib{Kechris}{book}{
    AUTHOR = {Kechris, Alexander S.},
     TITLE = {Classical descriptive set theory},
    SERIES = {Graduate Texts in Mathematics},
    VOLUME = {156},
 PUBLISHER = {Springer-Verlag, New York},
      YEAR = {1995},
     PAGES = {xviii+402},
      ISBN = {0-387-94374-9},
   MRCLASS = {03E15 (03-01 03-02 04A15 28A05 54H05 90D44)},
  MRNUMBER = {1321597},
MRREVIEWER = {Jakub Jasi\'{n}ski},
       DOI = {10.1007/978-1-4612-4190-4},
       URL = {https://doi.org/10.1007/978-1-4612-4190-4},
}

\bib{Szlenk1968}{article}{
  author={Szlenk, W.},
  title={The non existence of a separable reflexive Banach space universal for all separable reflexive Banach spaces},
  journal={Studia Math.},
  volume={30},
  date={1968},
  pages={53\ndash 61},
}
\end{biblist}
\end{bibdiv}



\end{document}